\numberwithin{equation}{section}
\theoremstyle{plain}
\newtheorem{theorem}{Theorem}[section]
\newtheorem{proposition}[theorem]{Proposition}
\newtheorem{corollary}[theorem]{Corollary}
\newtheorem{ques}[theorem]{Question}
\theoremstyle{definition}
\newtheorem{remark}[theorem]{Remark}
\DeclareMathOperator{\Isom}{{\mathrm Isom}}
\def\geo{\partial_{\infty}}
\def\H{\mathbb{H}}
\title{Kleinian groups via strict hyperbolization}
\author{Beibei Liu}
\address{B.L.: School of Mathematics, Georgia Tech, Atlanta, GA, USA, 30332}
\email{bliu96@gatech.edu}
\begin{document}
\begin{abstract}
In this paper, we construct Kleinian groups $\Gamma<\Isom(\H^{2n})$ from the direct product of $n$ copies of the rank 2 free group $F_2$ via strict hyperbolization. We give a description of the limit set and its topological dimension. Such construction can be generalized to other right-angled Artin groups. 
\end{abstract}

\maketitle

\section{Introduction}
A Kleinian group is a discrete isometry group of the $n$-dimensional hyperbolic space $\H^{n}$. There are a lot of ways to construct Kleinian groups. The most common ones are to use the Poincar\'e fundamental polyhedron theorem (see e.g. \cite{Mas, Rat}), the Klein-Maskit Combination Theorem (see e.g. \cite{KAG, Mas}), and to construct arithmetic groups and their subgroups (see e.g. \cite{MR, VS}). One can also deform a given Kleinian group or to find limits of sequences of Kleinian groups (see, e.g. \cite{Kap} for a survey). In this paper, we construct Kleinian groups from the direct product of $n$ copies of the rank 2 free group $F_2\times \cdots \times F_2=F^{n}_{2}$ via the strict hyperbolization. 

The strict hyperbolization introduced by Charney and Davis is a  procedure which associates to a simplicial complex $K$ a piecewise hyperbolic space $\mathcal{G}_{X}(K)$ of curvature $\leq -1$ \cite{RM}. One can use one of Gromov's techniques to construct a polyhedron $\mathcal{H}(K)$ associated to $K$ which is a cubical cell complex where each cube is isometric to a regular Euclidean cube \cite{Gro}. The key ingredient in the strict hyperbolization procedure is to replace the Euclidean cube in $\mathcal{H}(K)$ by an appropriate face of some compact, connected, orientable, hyperbolic manifold $X$ with corner. The manifold $X$ is obtained by cutting an arithmetic hyperbolic manifold $M$ along a system of codimension one submanifolds \cite{RM}. Our construction of the Klenian groups in this paper relies on the arithmetic hyperbolic manifold $M$.  In particular, we need to take some finite cover of the manifold $M$ if necessary to ensure the normal injectivity radii of some closed geodesics are large enough. For simplicity, we still denote the finite cover by $M$. 

 The  direct product of $n$ copies of the rank $2$ free group $F_{2}$ is the fundamental group of the direct product of $n$ copies of a wedge of two circles, which we denote by $W^{n}$. The complex $W^{n}$ is actually the \emph{Salvetti complex} defined for the right-angled Artin group $F^{n}_{2}$. We refer the reader to the note \cite{Cha} for an introduction of right-angled Artin groups and the Salvetti complex. The $n$-dimensional complex $W^{n}$ corresponds to an $n$-dimensional  complex $Z$ embedded in the $2n$-dimensional arithmetic manifold $M^{2n}$ used in  the strict hyperbolization. The inclusion map $f': Z\rightarrow M^{2n}$ induces a map $f'_{\ast}: \pi_{1}(Z)\rightarrow \pi_{1}(M^{2n})$. We prove: 
 
 \begin{theorem}
\label{injective1}
The map $f'_{\ast}: \pi_{1}(Z)\rightarrow \pi_{1}(M^{2n})$ is injective. Hence $\Gamma_{n}=\pi_{1}(Z)$ is a torsion free Kleinian group, i.e. a discrete torison free isometry subgroup of $\Isom(\H^{2n})$. 
\end{theorem}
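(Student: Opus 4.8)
The plan is to pass to the universal cover $p\colon\H^{2n}\to M^{2n}$ and reduce the statement to a connectivity property of the preimage of $Z$. By the usual covering-space criterion for a subspace, the inclusion $f'$ is $\pi_{1}$-injective if and only if every connected component of $p^{-1}(Z)$ is simply connected. The starting observation is that each cell of $Z$ is, by the strict hyperbolization construction, isometric to a face of a copy of the totally geodesic manifold-with-corners $X$ cut from $M^{2n}$; such faces are totally geodesic in $M^{2n}$, so each cell lifts to a convex subset of $\H^{2n}$. Thus a component $\widehat{Z}_{0}$ of $p^{-1}(Z)$ is a union of convex cells in the $\mathrm{CAT}(-1)$ space $\H^{2n}$, glued to one another along lower-dimensional convex faces. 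A single totally geodesic piece is already $\pi_{1}$-injective in $M^{2n}$; the content of the theorem is that these pieces assemble without creating relations.

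The heart of the argument is to show that the lifted convex pieces are arranged in a tree-like pattern, so that $\widehat{Z}_{0}$ is contractible. I would run a ping-pong argument keyed to the closed geodesics used in the construction: each hyperbolized cell lies in an embedded tubular neighborhood of such a geodesic, and the hypothesis that the normal injectivity radii are large (after passing to the finite cover still called $M$) guarantees that distinct lifts of the cells carry disjoint or nested families of these neighborhoods. Negative curvature then rules out cycles in the way the lifted cells meet: a cycle in the adjacency pattern would yield a closed path running through several totally geodesic pieces, and the separation provided by the large normal injectivity radius forces the ping-pong inequalities that make such a closed chain impossible. Hence the dual graph recording which lifted cells intersect is a tree, and all the intersections along it are convex. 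I expect establishing this tree structure — equivalently, checking the ping-pong inequalities from the injectivity-radius bound — to be the main obstacle, since it is exactly the point where the embeddedness of the tubular neighborhoods is indispensable.

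Granting the tree structure, $\widehat{Z}_{0}$ is obtained by gluing convex subsets of $\H^{2n}$ along convex intersections indexed by a tree, so an iterated application of the gluing lemma for $\mathrm{CAT}(-1)$ spaces (or the nerve lemma) shows that $\widehat{Z}_{0}$ is contractible, in particular simply connected. By the criterion above, $f'_{\ast}$ is injective. Finally, since $M^{2n}=\H^{2n}/\pi_{1}(M^{2n})$ is a closed hyperbolic manifold, $\pi_{1}(M^{2n})$ acts freely and properly discontinuously on $\H^{2n}$, hence is a discrete, torsion-free subgroup of $\Isom(\H^{2n})$. Therefore its subgroup $\Gamma_{n}=f'_{\ast}(\pi_{1}(Z))\cong\pi_{1}(Z)$ is again a discrete, torsion-free subgroup of $\Isom(\H^{2n})$, that is, a torsion-free Kleinian group.
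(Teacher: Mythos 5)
Your reduction is fine as far as it goes: for a reasonable subcomplex $Z\subset M^{2n}$, injectivity of $f'_{\ast}$ is indeed equivalent to each component of $p^{-1}(Z)$ being simply connected, and each piece of $Z$ does lift to a convex subset (a totally geodesic copy of $\mathbb{H}^{n}$) of $\mathbb{H}^{2n}$. But the heart of your argument --- the claim that the lifted pieces meet in a tree-like pattern --- is false as stated, and the ping-pong that is supposed to establish it is never actually set up. Through any single lift $\tilde{O}$ of the point $O$ pass lifts of all $2^{n}$ submanifolds $\Sigma_{i_{1}\cdots i_{n}}$, and these pairwise intersect (at $\tilde{O}$, and more generally along lifts of $\phi^{-1}(T^{m})$ for every $0\le m\le n-1$, since two pieces whose subscripts agree in $m$ entries meet in an $m$-dimensional totally geodesic submanifold). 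So the dual graph recording intersections of lifted pieces contains complete graphs on $2^{n}$ vertices and is very far from a tree; no gluing-along-a-tree or nerve argument applies in the form you describe. Also, your geometric picture is off at a key point: it is the closed geodesics $\gamma_{k},\gamma'_{k}$ (the pairwise intersections of the pieces) that are required to have large embedded tubular neighborhoods, not the $n$-dimensional pieces themselves, which are closed totally geodesic submanifolds and lie in no such neighborhood. You have correctly identified \emph{where} the difficulty sits --- ruling out closed chains of pieces, using the injectivity-radius hypothesis --- but your proposal defers exactly that step to unspecified ``ping-pong inequalities,'' which is the entire content of the theorem.

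What the paper does instead is direct and quantitative. Given a nontrivial $\omega=\omega_{1}\cdots\omega_{k}$ with $\omega_{i}\in\pi_{1}(\Sigma_{i},O)$, it lifts the corresponding concatenated loop to a bi-infinite piecewise geodesic $\tilde{w}$ in $\mathbb{H}^{2n}$, each segment lying in a lift $\tilde{\Sigma}_{i}$. Where consecutive lifts meet along a geodesic $\tilde{\gamma}_{i}$ (so the turning angle can be arbitrarily small), the path is replaced, within its homotopy class, by one built from perpendiculars to $\tilde{\gamma}_{i}$ and the common perpendicular segment $a_{i}b_{i}$ between $\tilde{\gamma}_{i}$ and its translate; the hypothesis that the normal injectivity radii of $\gamma_{k},\gamma'_{k}$ are at least $3L$, with $L=2\cosh^{-1}(2\sqrt{2})+1$, forces these segments to have length at least $3L$, while all corners of the new path are right angles. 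A bisector argument in the style of \cite[Proposition 7.2]{KL} (consecutive bisectors at distance at least $2$ apart) then shows the modified path is a uniform quasi-geodesic, and the Morse lemma gives that the deck transformation $f'_{\ast}(\omega)$ is nontrivial --- in effect proving the simple connectivity you wanted, loop by loop, with explicit constants. If you want to rescue your approach, you would have to replace ``the dual graph is a tree'' with this kind of quantitative statement that no nondegenerate chain of pieces closes up, which is precisely the quasi-geodesic estimate; your final paragraph on discreteness and torsion-freeness of $\Gamma_{n}$ as a subgroup of $\pi_{1}(M^{2n})$ is correct and matches the paper.
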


In general, one can use the method in the paper to construct Kleinian groups from other right-angled Artin groups such as the free abelian group $Z^{m}$ or the right-angled Artin groups represented by the $m$-gons. 

In contrast to Kleinian groups in dimension 3, there is no comprehensive structure theory for higher dimensional Kleinian groups, i.e. $n\geq 4$. One way to study higher dimensional Klenian groups is to see the geometric and topological properties of the limit set which is the accumulation set of an orbit in the visual boundary. For example, groups with zero dimensional limit sets are relatively well understood. We refer the readers to \cite{Kap} for more details about the study of higher dimensional Kleinian groups.  The limit set of the Kleinian groups $\Gamma_{n}$ we construct in the paper  is the closure of countably infinite many $(n-1)$-dimensional spheres $S^{n-1}$,  i.e. 
$$\Lambda(\Gamma_{n})=\mathcal{S}\cup E$$
where $\mathcal{S}$ is the union of the $(n-1)$-dimensional spheres $S^{n-1}$, and $E$ is the rest of points in the limit set with the cardinality of the continuum. The points in the limit set are endpoints of piecewise geodesic rays which are uniform quasi-geodesics. For the detailed description, see Section \ref{sec:construction} and Section \ref{sec:limitset}.

The Kleinian group $\Gamma_{n}$ is constructed via the right-angled Artin group $F^{n}_{2}$.  The boundary of $F^{n}_{2}$ here is defined to be the visual boundary of the universal cover of $W^{n}$, which is the join of $n$ copies of the Cantor set. In fact, the boundary of $F^{n}_{2}$ is well-defined  independent of choice of the CAT(0) space on which the group acts geometrically \cite{Rua}. This does not hold for general right-angled Artin groups \cite{CK}. On the contrary, for any nonelementary Kleinian group $\Gamma$, its limit set $\Lambda(\Gamma)$ cannot be a join of Cantor sets. However, we prove that the limit set $\Lambda(\Gamma_{n})$ contains the join of $n$ copies of $K_{3}$ where $K_{3}$ is a set of three points, hence it cannot  be embedded in $\mathbb{R}^{2n-2}$,  see  \cite[Lemma 9]{BKK}.

\begin{theorem}
\label{obsemb}
The limit set $\Lambda(\Gamma_{n})$ cannot be embedded in $\mathbb{R}^{2n-2}$. 
\end{theorem}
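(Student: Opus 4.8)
The strategy is to reduce the theorem to the combinatorial obstruction of \cite[Lemma 9]{BKK}, which asserts that the $n$-fold join $J_n=K_3*\cdots*K_3$ of $n$ three-point sets, a compact $(n-1)$-dimensional complex, does not topologically embed into $\mathbb{R}^{2n-2}$. Since a topological embedding restricts to a topological embedding on any subspace, it suffices to exhibit a subset of $\Lambda(\Gamma_n)$ homeomorphic to $J_n$: if an embedding $h\colon\Lambda(\Gamma_n)\hookrightarrow\mathbb{R}^{2n-2}$ existed, then $h$ restricted to such a subset would embed $J_n$ into $\mathbb{R}^{2n-2}$, contradicting \cite[Lemma 9]{BKK}. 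Thus the entire content of the theorem is the construction of an embedded copy of $J_n$ inside $\Lambda(\Gamma_n)$.

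To build this copy I would use the description of $\Lambda(\Gamma_n)$ from Section~\ref{sec:limitset}. As recalled in the introduction, the visual boundary of the universal cover $\tilde W^n$ is the join $C_1*\cdots*C_n$ of the $n$ end-spaces (Cantor sets) of the tree factors, and every point of this join is represented by a piecewise geodesic ray in $\H^{2n}$ that is a uniform quasi-geodesic and therefore converges to a point of $\Lambda(\Gamma_n)$. Writing $\Phi$ for the resulting endpoint map, I would fix three distinct ends $p_i^1,p_i^2,p_i^3$ in each factor $C_i$; their join is a canonical copy of $J_n$ inside $C_1*\cdots*C_n$, and the proposed subset of the limit set is $\Phi(J_n)$.

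The crux is to prove that $\Phi$ restricts to a homeomorphism on this copy of $J_n$. As $J_n$ is compact and $\Lambda(\Gamma_n)\subset S^{2n-1}$ is Hausdorff, it suffices to show $\Phi|_{J_n}$ is injective, and this reduces to a divergence statement: distinct points of $J_n$ are represented by piecewise geodesic uniform quasi-geodesic rays whose mutual distance in $\H^{2n}$ is unbounded, so that, $\H^{2n}$ being Gromov hyperbolic, they have distinct ideal endpoints. Two points of $J_n$ either use different ends in some factor, where the tree directions already separate, or use the same ends with different join coordinates, which is the delicate case: these rays lie in a common flat of the product of trees, and because $\H^{2n}$ is CAT$(-1)$ and admits no higher-dimensional quasi-flats, the map $\Phi$ necessarily distorts such flats, so collapse of the corresponding simplices of $J_n$ is a priori possible and must be excluded.

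I expect this non-collapse to be the main obstacle. The plan is to track the combinatorics of the block decomposition of $\tilde Z$: a ray with fixed ends and varying join coordinates crosses a sequence of cubes that depends on its direction, so that distinct directions eventually route the lifted piecewise geodesic through different hyperbolic blocks, and the strict negative curvature of the blocks forces the two rays to diverge. Making this quantitative with the uniform quasi-geodesic constants of Section~\ref{sec:construction}, and thereby establishing injectivity of $\Phi|_{J_n}$ on the top-dimensional simplices, is the technical heart of the argument; once it is done, $\Phi(J_n)\cong J_n$ sits inside $\Lambda(\Gamma_n)$ and \cite[Lemma 9]{BKK} finishes the proof.
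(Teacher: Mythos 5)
Your opening reduction is exactly the paper's: by \cite[Lemma 9]{BKK} the join $\ast^{n}K_{3}$ does not embed in $\mathbb{R}^{2n-2}$, so it suffices to exhibit an embedded copy of it inside $\Lambda(\Gamma_{n})$. But your construction of that copy has a genuine gap, and in fact two. First, the endpoint map $\Phi$ you propose is not well defined: $\Gamma_{n}=\pi_{1}(Z)$ is \emph{not} $F_{2}^{n}$ --- hyperbolization replaces each $n$-torus $f(a_{i_{1}}\times\cdots\times a_{i_{n}})$ of $W^{n}$ by the closed hyperbolic manifold $\Sigma_{i_{1}\cdots i_{n}}$ --- so $\tilde{Z}$ is not the product of trees, its CAT(0) boundary is not $C_{1}\ast\cdots\ast C_{n}$ (it is $\overline{\Gamma(\mathbb{S})}$, a union of round $(n-1)$-spheres plus extra points), and there is no natural map from the join of Cantor sets to $\Lambda(\Gamma_{n})$. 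An interior point of a top-dimensional join simplex corresponds to an (possibly irrational) direction in a flat of the tree product, and $\tilde{Z}$ contains neither that flat nor any canonical piecewise-geodesic representative of such a direction; the piecewise geodesics of Section \ref{sec:construction} represent group elements and combinatorial itineraries through blocks, not arbitrary join coordinates. Second, even granting a definition of $\Phi$, you explicitly leave the injectivity (``non-collapse'') on top-dimensional simplices unproven and call it the technical heart; since the entire content of the theorem beyond \cite{BKK} is precisely the existence of the embedded $\ast^{n}K_{3}$, the proposal as written is a plan, not a proof.

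The paper's actual construction sidesteps both issues by never leaving the spheres $\mathbb{S}$: it finds $\ast^{n}K_{3}$ inside the union of the $2^{n}$ round boundary spheres $S_{i_{1}\cdots i_{n}}\subset S^{2n-1}=\partial\mathbb{H}^{2n}$ of the totally geodesic lifts through a single lift $\tilde{O}$. These spheres lie in $\Lambda(\Gamma_{n})$ because each $\Sigma_{i_{1}\cdots i_{n}}$ is a closed totally geodesic submanifold whose fundamental group is a uniform lattice of its $\mathbb{H}^{n}$ and a subgroup of $\Gamma_{n}$. Because the submanifolds intersect orthogonally, each sphere $S_{i_{1}\cdots i_{n-1}0}$ decomposes as a spherical join of an equatorial $S^{n-2}$ with the endpoint pair $\{A,A'\}$ of the lift of $\gamma_{n}$ through $\tilde{O}$ (and likewise with $\{B,B'\}$ for $\gamma'_{n}$), so all join segments of $\ast^{n}K_{3}$ are realized as honest arcs of round spheres: embeddedness is automatic from the explicit configuration, and no quasi-geodesic divergence estimates are needed anywhere. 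The base case $n=2$ is the $K_{3,3}$ configuration in the four circles of Figure \ref{circles}, and the inductive step joins $\ast^{n-1}K_{3}\subset\bigcup S_{i_{1}\cdots i_{n-1}0}$ with the four points $\{A,A',B,B'\}$, of which any three give the last $K_{3}$ factor. If you want to salvage your route, the fix is essentially to replace your abstract ends $p_{i}^{1},p_{i}^{2},p_{i}^{3}$ by endpoints of actual lifted geodesics and your map $\Phi$ by these spherical joins --- at which point you have reproduced the paper's argument.
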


On the other hand, It is interesting to ask what properties of the set $\geo F^{n}_{2}$ is preserved in $\Lambda(\Gamma_{n})$.  For example one can ask: 
\begin{ques}
Whether the support of the simplicial homology of $\geo F^{n}_{2}$ is the same as the support  of simplicial homology (or \v{C}ech cohomology)  of $\Lambda(\Gamma_{n})$. 
\end{ques}

The homology group $H_{i}(\geo F^{n}_{2})$  is nonzero if and only if $i=0$ or $n-1$. We prove that:



\begin{theorem}
\label{topdim}
The topological dimension of $\Lambda(\Gamma)$ equals  $n-1$. 
\end{theorem}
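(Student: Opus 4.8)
The plan is to establish the topological dimension of $\Lambda(\Gamma_n)$ by proving matching upper and lower bounds, both equal to $n-1$. The lower bound should be nearly immediate from the results already obtained: since $\Lambda(\Gamma_n)$ contains the join of $n$ copies of the three-point set $K_3$ (as established in the discussion preceding Theorem~\ref{obsemb}), and the join of $n$ copies of a nonempty finite set of more than one point has topological dimension $n-1$, the monotonicity of covering dimension under inclusion of closed subsets gives $\dim \Lambda(\Gamma_n) \geq n-1$. I would verify that the relevant join is indeed a closed subset of the limit set so that monotonicity applies cleanly.

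The upper bound is where the real work lies, and I would attack it through the structural description $\Lambda(\Gamma_n) = \mathcal{S} \cup E$ from Section~\ref{sec:limitset}. The set $\mathcal{S}$ is a countable union of embedded $(n-1)$-spheres $S^{n-1}$, each of which has topological dimension exactly $n-1$. The plan is to use the countable sum theorem for covering dimension: a space that is a countable union of closed subsets each of dimension $\leq n-1$ has dimension $\leq n-1$. The difficulty is that $\mathcal{S}$ by itself is only the union, not its closure, and the exceptional set $E$ must be controlled. I would first argue that $E$, being described as the set of endpoints of certain non-sphere piecewise-geodesic quasi-geodesic rays, is totally disconnected or more strongly has topological dimension $0$; intuitively these are the "limiting" directions that do not lie on any single sphere, parametrized by infinite reduced words in the Cantor-set factors, so $E$ should carry a natural Cantor-set-like (hence zero-dimensional) structure.

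The main obstacle I anticipate is handling the interaction between $\mathcal{S}$ and $E$ in the closure: even if each sphere contributes dimension $n-1$ and $E$ contributes dimension $0$, I cannot simply add dimensions, and the countable sum theorem requires the pieces to be closed in the whole space. I would therefore write $\Lambda(\Gamma_n)$ as a countable union of the closed spheres $\overline{S^{n-1}_j}$ together with the set $E$, and argue that $E$ is a closed (indeed compact, since the limit set is compact) zero-dimensional subset. Applying the countable sum theorem to this decomposition then yields $\dim \Lambda(\Gamma_n) \leq \max(n-1, 0) = n-1$. The key technical points to nail down are (i) that each sphere is closed in $\Lambda(\Gamma_n)$, or can be enlarged to a closed set of the same dimension, and (ii) that $\dim E = 0$, which I would derive from the combinatorial description of its points as limits along divergent sequences of distinct $F_2$-factors, giving each point of $E$ arbitrarily small neighborhoods with empty boundary in the subspace topology.

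Combining the two bounds gives $\dim \Lambda(\Gamma_n) = n-1$, as claimed. I expect the lower bound and the upper bound on $\mathcal{S}$ to be routine given the earlier sections, so the crux of the argument is the careful verification that the exceptional set $E$ is closed and zero-dimensional, together with the correct bookkeeping to apply the countable sum theorem to the full decomposition $\mathcal{S} \cup E$.
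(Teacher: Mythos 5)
Your lower bound is fine (in fact it can be simplified: a single sphere $S_{i_1\cdots i_n}\subset\Lambda(\Gamma_n)$ already gives $\dim\Lambda(\Gamma_n)\geq n-1$ by monotonicity, with no need for the join $\ast^n K_3$). The fatal problem is the upper bound, specifically your claim that $E$ is closed. It is not, and the parenthetical justification ``indeed compact, since the limit set is compact'' is a non-sequitur — subsets of compact spaces need not be closed. Concretely, $\Lambda(\Gamma_n)=\overline{\Gamma(\mathbb{S})}$, so the union of spheres is dense; but $E$ is dense as well whenever it is nonempty: given any sphere point $\xi$, the endpoint of a ray $\rho$ staying in a single lift, one can follow $\rho$ for time $k$ and then wander through infinitely many lifts forever, producing points of $E$ converging to $\xi$. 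A dense proper subset cannot be closed. With $E$ not closed, the countable sum theorem simply does not apply to your decomposition, and the only unconditional replacement, the Menger--Urysohn addition theorem $\dim(A\cup B)\leq\dim A+\dim B+1$, yields only $\dim\Lambda(\Gamma_n)\leq n$ — off by one. This loss is genuine, not bookkeeping: $\mathbb{Q}$ and $\mathbb{R}\setminus\mathbb{Q}$ are each zero-dimensional, the first a dense $F_\sigma$ and the second its dense complement, yet their union is one-dimensional; your pair $(\mathcal{S},E)$ has exactly this dense-$F_\sigma$-plus-dense-complement shape, so no interaction-free argument can close the gap. Moreover, even your claim $\dim E=0$ is unsubstantiated: $E$ is neither closed nor locally compact, and for such separable metric spaces total disconnectedness does not imply zero-dimensionality (complete Erd\H{o}s space is totally disconnected and one-dimensional), so the heuristic that $E$ is ``parametrized by infinite words, hence Cantor-like'' is not a proof.

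The paper avoids all of these interaction issues by arguing globally rather than pointwise: Theorem \ref{injective2} shows that the natural map $i\colon\partial Z_{CAT(0)}\rightarrow\partial Z_{\H^{2n}}=\Lambda(\Gamma_n)$ is a homeomorphism, and then the dimension of the CAT(0) boundary is $n-1$ by Bestvina's theorem \cite[Theorem 1.7]{Best}, which computes it from the (cohomological) dimension data of the group $F_2^n$ acting geometrically on $\tilde{Z}$, not from any decomposition of the boundary into spheres and exceptional points. If you wanted to salvage a decomposition-style argument you would need a sum-theorem-compatible exhaustion of $\Lambda(\Gamma_n)$ by closed sets of dimension $\leq n-1$, and producing one essentially forces you back to a global structural result of the same strength as Theorem \ref{injective2} combined with Bestvina's theorem.
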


\begin{corollary}
For any $n\geq 2$,  $H_{n-1}(\Lambda(\Gamma_{n}))$ is nontrivial. 
\end{corollary}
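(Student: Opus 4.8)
The plan is to deduce the nonvanishing of $H_{n-1}(\Lambda(\Gamma_n))$ from the exact computation of the topological dimension in Theorem \ref{topdim} together with the explicit description of $\Lambda(\Gamma_n)$ as the closure of the $(n-1)$-spheres in $\mathcal S$. Since $\dim \Lambda(\Gamma_n)=n-1$, the \v Cech cohomology of $\Lambda(\Gamma_n)$ already vanishes in all degrees above $n-1$, so the content of the corollary is that the top degree is genuinely attained. I would first record that the join $J=K_3^{*n}$ sitting inside $\Lambda(\Gamma_n)$ (the same subspace used for Theorem \ref{obsemb}) has $\tilde H_{n-1}(J)=\mathbb Z^{2^n}\neq 0$: using the join formula $\tilde H_k(A*B)\cong\bigoplus_{i+j=k-1}\tilde H_i(A)\otimes\tilde H_j(B)$ (all groups free, so no Tor terms) and $\tilde H_0(K_3)=\mathbb Z^2$, one gets inductively $\tilde H_{n-1}(K_3^{*n})=(\mathbb Z^2)^{\otimes n}$. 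This identifies the expected class at the level of the combinatorial model $\geo F_2^n$ and shows the top homology is ``present'' before passing to the limit set.

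The substantive step is to promote this to a nonzero class in $\Lambda(\Gamma_n)$ itself, and for this I would use Alexander duality inside $\geo\H^{2n}=S^{2n-1}$. Writing $\Omega=S^{2n-1}\setminus\Lambda(\Gamma_n)$ for the domain of discontinuity, Alexander duality gives $\check H^{n-1}(\Lambda(\Gamma_n);\mathbb Z)\cong\tilde H_{n-1}(\Omega;\mathbb Z)$. Each sphere $S^{n-1}_i\subset\mathcal S$ is the limit set of a convex-cocompact subgroup $H_i<\Gamma_n$ whose convex core is an $n$-dimensional totally geodesic piece, so $S^{n-1}_i$ is a round $(n-1)$-sphere of codimension $n$ in $S^{2n-1}$ and therefore carries a normal meridian sphere $\mu_i\cong S^{n-1}$. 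I would show that a sufficiently small $\mu_i$ lies entirely in $\Omega$ and has linking number $1$ with $S^{n-1}_i$; since $S^{n-1}_i\subset\Lambda(\Gamma_n)$, a nonzero linking number obstructs $\mu_i$ from bounding in the complement, so $[\mu_i]$ is a class of infinite order in $\tilde H_{n-1}(\Omega)$. Dualizing gives a free class in the top \v Cech cohomology $\check H^{n-1}(\Lambda(\Gamma_n))$, and by the universal coefficient theorem this free part is dual to $H_{n-1}(\Lambda(\Gamma_n))$, forcing $H_{n-1}(\Lambda(\Gamma_n))\neq 0$ (in the top degree \v Cech and singular homology agree for this finite-dimensional compactum). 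I expect that the lower-bound half of Theorem \ref{topdim} is in fact proved by exactly this mechanism, so the corollary is essentially the homological shadow of that inequality.

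The hard part will be the local analysis needed to place $\mu_i$ in $\Omega$: a priori the remaining spheres of $\mathcal S$ and the exceptional set $E$ accumulate onto $S^{n-1}_i$, so I must select a point $p\in S^{n-1}_i$ and a scale at which the only part of $\Lambda(\Gamma_n)$ meeting a small normal $n$-disk at $p$ is $S^{n-1}_i$ itself. This is precisely where the fine description of the limit set from Section \ref{sec:limitset} enters: the spheres are organized along the tree-product structure of $\geo F_2^n$, and near a generic point of a fixed sphere the remaining limit spheres are uniformly separated, so a transverse meridian of small enough radius avoids $\Lambda(\Gamma_n)\setminus S^{n-1}_i$. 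Granting this separation, the linking computation is routine. As a sanity check, for $n=2$ the subspace $J=K_3*K_3$ is the nonplanar graph $K_{3,3}$ with $H_1=\mathbb Z^4$, consistent both with Theorem \ref{obsemb} and with the predicted nonvanishing of $H_1(\Lambda(\Gamma_2))$.
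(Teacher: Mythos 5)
Your overall strategy (Alexander duality plus a linking meridian in the domain of discontinuity) is a genuinely different route from the paper's, but it has a real gap precisely at the step you flag as ``the hard part,'' and the separation claim you hope for is in fact false as stated. Fix a sphere $S^{n-1}_{i}=\partial\tilde{\Sigma}_{i}$. The translates of the other spheres meet $S^{n-1}_{i}$ at the endpoint pairs of the lifts of the orthogonal closed geodesics $\gamma_{k},\gamma'_{k}$ inside $\tilde{\Sigma}_{i}\cong \H^{n}$, and the endpoint pairs of lifts of a fixed closed geodesic in a closed hyperbolic manifold are \emph{dense} in the boundary sphere. So the remaining limit spheres touch $S^{n-1}_{i}$ along a dense subset and flare off transversally at each such point: there is no point $p\in S^{n-1}_{i}$ and no scale at which a small normal disk meets $\Lambda(\Gamma_{n})$ only in $S^{n-1}_{i}$, and ``uniform separation near a generic point'' fails at every point. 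One could still try to argue that the specific $(n-1)$-dimensional meridian fiber over $p$ misses the closure (the dimension count $(n-1)+(n-1)<2n-1$ is suggestive), but that requires controlling the accumulation set $E$ and the closure of the orbit of spheres, which you do not do. A second, smaller gap: even granting $\mu_{i}\subset\Omega$ with nonzero linking number, Alexander duality produces a class in \v{C}ech cohomology $\check{H}^{n-1}(\Lambda(\Gamma_{n}))$, and passing from that to \emph{singular} homology $H_{n-1}$ of a possibly wild compactum is not a routine universal-coefficients step (\v{C}ech cohomology of a compact metric space pairs with Steenrod--Sitnikov homology, not singular homology).

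The paper's proof is much shorter and runs in the opposite logical direction from your guess: it does not prove the dimension lower bound by linking, but rather \emph{uses} the equality $\dim\Lambda(\Gamma_{n})=n-1$ (obtained from Bestvina's dimension theorem for the CAT(0) boundary together with the homeomorphism $i:\partial Z_{CAT(0)}\to \partial Z_{\H^{2n}}$). By the extension characterization of covering dimension (Hurewicz--Wallman), $\dim\Lambda(\Gamma_{n})\le n-1$ means every map from a closed subset to $S^{n-1}$ extends over $\Lambda(\Gamma_{n})$; applying this to the identity map of an embedded sphere $S_{i_{1}\cdots i_{n}}$ yields a retraction $r:\Lambda(\Gamma_{n})\to S_{i_{1}\cdots i_{n}}$, so the inclusion is split injective on $H_{n-1}$ and the fundamental class of the sphere is nontrivial in $H_{n-1}(\Lambda(\Gamma_{n}))$. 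This avoids all of the local analysis near the sphere, which is exactly where your argument breaks. Your join computation $\tilde{H}_{n-1}(K_{3}^{\ast n})\cong(\mathbb{Z}^{2})^{\otimes n}$ is correct but is not needed for this corollary; it belongs to the embedding obstruction, not to the homology statement.
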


For $0<i<n$, we cannot determine whether $H_{i}(\Lambda(\Gamma_{n}))$ vanishes or not.

{\bf Acknowledgements.} I deeply appreciate Grigori Avramidi for introducing this interesting question to me and the helpful discussions during the project. I  want  to thank Tam Nguyen-Phan for her useful suggestions and discussions of the proof of Theorem \ref{injective1}.  I also would like to thank Dan Margalit for his comments on the earlier draft. The project began during my visit in Max Planck Institute for Mathematics in Bonn, and I am grateful to the institute  for its hospitality and financial support.










\section{Strict hyperbolization}
In this section, we review the strict hyperbolization introduced by Charney and Davis \cite{RM}, which is used to construct the higher dimensional Kleinian groups $\Gamma_{n}<\Isom(\H^{2n})$ corresponding to the right-angled Artin group $F^{n}_{2}$. 

Let $B_{n}$ denote the symmetric group of the $n$-dimensional cube, and let $r_{i}$ denote the linear reflection across the hyperplane $x_{i}=0$ in $\mathbb{R}^{n}$. The group $B_{n}$ has standard action on $\mathbb{R}^{n}$ generated by permutations of coordinates and the reflections $r_i$. 

\begin{theorem}\cite[Theorem 6.1]{RM}
\label{arima}
For each $n\geq 0$, there is a closed connected hyperbolic $n$-dimensional manifold $M^{n}$, a system $\mathcal{Y}=\{Y_1, \cdots, Y_n\}$ of closed connected submanifolds of codimensional one in $M^{n}$, and an isometric action of $B_{n}$ on $M^{n}$, stabilizing $\mathcal{Y}$, such that the following properties hold: 
\begin{enumerate}
\item $Y_i$ is a component of the fixed point set of $r_i$ on $M^{n}$. 

\item Each $Y_i$ is totally geodesic in $M^{n}$. 

\item The $Y'_{i}$s intersect orthogonally. 

\item $Y_1\cap \cdots \cap Y_n$ is a single point $y$. 

\item $B_n$ fixes $y$ and the representation of $B_n$ on $T_{y}M^{n}$ is equivalent to the standard representation. 

\item $M^{n}$, as well as each $Y_i$ is orientable.

\end{enumerate}
\end{theorem}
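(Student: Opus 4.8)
The plan is to realize $M^n$ as a \emph{closed arithmetic hyperbolic manifold} carrying the $B_n$-symmetry tautologically, so that the $Y_i$ become the fixed hyperplanes of the reflections $r_i$. I would work over the real quadratic field $K=\mathbb{Q}(\sqrt{2})$ with ring of integers $\mathcal{O}=\mathbb{Z}[\sqrt{2}]$ and consider the quadratic form
$$q(x_0,\dots,x_n)=-\sqrt{2}\,x_0^2+x_1^2+\cdots+x_n^2,$$
which is symmetric in the last $n$ coordinates and has signature $(n,1)$ at the standard real place. Its nontrivial Galois conjugate $\sqrt{2}\,x_0^2+x_1^2+\cdots+x_n^2$ is positive definite, hence anisotropic over $\mathbb{R}$, so $q$ is anisotropic over $K$; by Godement's compactness criterion the integral orthogonal group $O(q,\mathcal{O})$ acts properly and cocompactly on $\H^n=\{q=-1,\ x_0>0\}$ by isometries. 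In this model the signed permutations of $x_1,\dots,x_n$ preserve $q$ and have entries in $\{0,\pm1\}\subset\mathcal{O}$, so they give an embedding $B_n\hookrightarrow O(q,\mathcal{O})$ in which $r_i$ is the reflection negating $x_i$.

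Next I would manufacture the manifold and the action in one stroke. Choose a prime ideal $\mathfrak{p}\subset\mathcal{O}$ of residue characteristic at least $3$ and set $\Gamma=\Gamma(\mathfrak{p})\cap SO^{+}(q)$, the orientation-preserving part of the principal congruence subgroup of level $\mathfrak{p}$. By a Minkowski-type argument $\Gamma$ is torsion free, and being a kernel of reduction mod $\mathfrak{p}$ intersected with the identity component it is \emph{normal} in $O(q,\mathcal{O})$. Since every element of $B_n$ is an integral matrix, $B_n$ normalizes $\Gamma$ and therefore descends to an isometric action on the closed orientable hyperbolic manifold $M^n=\H^n/\Gamma$; choosing $\mathfrak{p}$ with norm at least $3$ also forces $B_n\cap\Gamma=1$, so the action is faithful. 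Writing $H_i=\{x_i=0\}\cap\H^n$ for the totally geodesic copy of $\H^{n-1}$ fixed by $r_i$, and letting $\tilde y=(2^{-1/4},0,\dots,0)$ be the unique common point of all the $H_i$, I would take $Y_i$ to be the component of the fixed-point set of $r_i$ on $M^n$ containing $y=[\tilde y]$.

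With this setup most of the listed properties are immediate. Property (1) holds by definition; property (2) follows because the fixed set of an isometric involution is totally geodesic (equivalently, each $H_i$ is a totally geodesic hyperplane upstairs); property (3) is the elementary fact that the coordinate hyperplanes $H_i$ and $H_j$ are $q$-orthogonal; and property (5) holds because $B_n$ fixes $\tilde y$ and acts on $T_{\tilde y}\H^n=\langle e_1,\dots,e_n\rangle$ precisely by signed permutations, i.e. the standard representation, which descends to $T_yM^n$. Property (6) is arranged by having passed to $SO^{+}(q)$ for $M^n$, and, if needed, to a further finite $B_n$-invariant cover to orient each $Y_i$.

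The main obstacle is property (4): that the chosen components satisfy $Y_1\cap\cdots\cap Y_n=\{y\}$ exactly, even though each fixed-point set may have several components downstairs and the hypersurfaces could a priori meet again after wrapping around $M^n$. The remedy, which is precisely the mechanism flagged in the introduction, is to pass to a \emph{deep enough} congruence cover (shrinking $\mathfrak{p}$) so that each $Y_i$ is embedded with large normal injectivity radius and the local conical picture at $\tilde y$ is realized globally; any second intersection point would then lift to a common point of $\Gamma$-translates of the $H_i$ distinct from $\tilde y$, which a lower bound on the injectivity radius excludes. Checking that a single level works simultaneously for all $i$, and that the $Y_i$ remain connected and embedded, is the technical heart of the argument and is where the finite-cover hypotheses are spent.
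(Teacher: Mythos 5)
Your arithmetic setup is, in substance, the very construction the paper is citing rather than proving: Theorem \ref{arima} is quoted from Charney--Davis \cite{RM}, and the paper's own contribution is only the remark isolating the key mechanism, namely that $\pi_1(M^n)$ is a torsion-free \emph{congruence} subgroup of a cocompact arithmetic lattice $O(\phi)<O(n,1)$ normalized by $B_n$, with property (4) resting on \cite[Lemma 6.6]{RM}. Your form $-\sqrt{2}\,x_0^2+x_1^2+\cdots+x_n^2$ over $\mathbb{Z}[\sqrt{2}]$, Godement compactness, $B_n$ realized as signed permutation matrices, and a torsion-free normal principal congruence subgroup of odd level reproduce that construction faithfully, and your treatment of properties (1)--(3), (5) and (6) is correct (orientability of the $Y_i$ needs the small extra argument you gesture at, but that is routine).

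The genuine gap is your argument for property (4). A lower bound on the \emph{normal injectivity radius} of $Y_i$ is a statement about an embedded tubular neighborhood of a single hypersurface; it places no constraint whatsoever on a point $x\neq y$ where the $n$ \emph{distinct} hypersurfaces $Y_1,\dots,Y_n$ meet. Such a point lifts to $\tilde{x}\in\gamma_1H_1\cap\cdots\cap\gamma_nH_n$ with the $\gamma_i\in\Gamma$ unrelated to one another, possibly far from the orbit of $\tilde{y}$, and transverse crossings of distinct hypersurfaces are compatible with arbitrarily large embedded tubes around each of them --- so nothing is excluded. Nor is there any monotonicity in covers: in a deeper congruence cover the preimages of the $Y_i$ acquire more components and more mutual intersection points, so ``deep enough level'' has no bite without an algebraic input. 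The correct mechanism, which is exactly what the paper's remark after Theorem \ref{arima} flags, is congruence plus torsion-freeness: at any $x\in Y_1\cap\cdots\cap Y_n$ the conjugated reflections $s_i=\gamma_i r_i\gamma_i^{-1}$ fix $\tilde{x}$ and satisfy $s_i\equiv r_i \pmod{\mathfrak{p}}$; since the finite group $\langle s_1,\dots,s_n\rangle$ meets the torsion-free normal subgroup $\Gamma$ trivially, it injects into the quotient mod $\mathfrak{p}$ and is therefore a copy of $(\mathbb{Z}/2)^n$ of commuting reflections with pairwise orthogonal mirrors through $\tilde{x}$, and the rest of \cite[Lemma 6.6]{RM} (an arithmetic argument using the odd residue characteristic) identifies any such point with $y$. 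Finally, you misattribute the finite-cover device: in this paper the passage to covers with large normal injectivity radius (Proposition \ref{normalinj}, Corollary \ref{coro:inj}) concerns the closed geodesics $\gamma_k,\gamma'_k$ needed for the quasi-geodesic argument in Theorem \ref{injective1}, not the single-point intersection property here.
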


\begin{remark}
The group $B_{n}$ normalizes the group $\pi_{1}(M^{n})$ by the construction of $M^{n}$, see \cite[Section 6]{RM}. Hence, it acts on $M^{n}$ as isometries. The key point to ensure $Y_1\cap \cdots \cap Y_n$ is a single point is to prove that $\pi_{1}(M^{n})$ is a torsion free congruence subgroup of some cocompact lattice $O(\phi)<O(n, 1)$. For details,  see \cite[Lemma 6.6]{RM}. 
\end{remark}



\begin{proposition}
\label{normalinj}
Given a constant $R>0$, there exists a closed connected hyperbolic $n$-dimensional manifold $M'$ and a closed geodesic $\gamma\subset M'$ such that the normal injective radius of $\gamma$ in $M'$ is at least $R$, and  $M'$ satisfies  all the conditions in Theorem \ref{arima}.

\end{proposition}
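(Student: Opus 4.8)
The plan is to obtain $M'$ as a suitable finite cover of the manifold $M^{n}$ from Theorem \ref{arima}, chosen inside the congruence tower so that the covering is preserved by $B_{n}$. Write $\Gamma=\pi_{1}(M^{n})$, a torsion-free congruence subgroup of the cocompact lattice $O(\phi)<O(n,1)$, and recall that $B_{n}$ normalizes $\Gamma$ and fixes a point $\tilde y\in\H^{n}$ lying over $y$, with each reflection $r_{i}$ fixing a totally geodesic hyperplane $H_{i}\subset\H^{n}$ such that $H_{1}\cap\cdots\cap H_{n}=\{\tilde y\}$ and $H_{i}$ projects to $Y_{i}$. I fix once and for all a closed geodesic $\gamma_{0}\subset M^{n}$, represented by a primitive hyperbolic element $g_{0}\in\Gamma$ with axis $A\subset\H^{n}$; since $\Gamma$ is torsion free one has $\mathrm{Stab}_{\Gamma}(A)=\langle g_{0}\rangle$.

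First I would record the geometric criterion for the normal injectivity radius. For a finite-index subgroup $\Gamma'\le\Gamma$ with $\mathrm{Stab}_{\Gamma'}(A)=\Gamma'\cap\langle g_{0}\rangle$, the closed geodesic $\gamma$ it determines in $M'=\Gamma'\backslash\H^{n}$ has normal injectivity radius at least $R$ precisely when $d(A,hA)\ge 2R$ for every $h\in\Gamma'$ with $hA\neq A$; indeed the embedded tube of radius $r$ about $\gamma$ lifts to the disjoint tubes about the $\Gamma'$-translates of $A$, and $\H^{n}$ has no focal points. The point is that the set $\{h\in\Gamma:\ d(A,hA)<2R\}$, taken modulo the left and right action of $\langle g_{0}\rangle$, is finite: a compact fundamental segment $I\subset A$ for $\langle g_{0}\rangle$ has compact $R$-neighbourhood, and by proper discontinuity only finitely many double cosets $\langle g_{0}\rangle\, h\,\langle g_{0}\rangle$ translate $A$ into it. Thus it suffices to find $\Gamma'$ excluding this finite list.

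Next I would produce such a $\Gamma'$ inside the congruence tower while keeping the $B_{n}$-symmetry. Since each bad double-coset representative $h\notin\langle g_{0}\rangle$ is a nontrivial element of $\Gamma$ and $\bigcap_{\mathfrak a}\Gamma(\mathfrak a)=1$, I can choose a $B_{n}$-invariant level $\mathfrak a$ (for instance a rational prime $p$, whose principal congruence subgroup is automatically normalized by the integral action of $B_{n}$) large enough that every bad $h$ is nontrivial modulo $\mathfrak a$; then $\Gamma'=\Gamma(\mathfrak a)$ contains none of them, and after replacing $g_{0}$ by a power $g_{0}^{k}\in\Gamma(\mathfrak a)$ the criterion above gives normal injectivity radius $\ge R$. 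Because $\mathfrak a$ is $B_{n}$-invariant, $\Gamma'$ is normalized by $B_{n}$, so all of $B_{n}$ descends to an isometric action on $M'$, and $M'$ is orientable as a cover of the orientable $M^{n}$.

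The main work, and the step I expect to be the real obstacle, is to verify that $M'$ still satisfies conditions (1)--(6) of Theorem \ref{arima}. Conditions (1), (2), (3), (5) are inherited locally: I define $Y_{i}'$ to be the component of the fixed set of $r_{i}$ on $M'$ containing the image $\bar y$ of $\tilde y$, namely the projection of $H_{i}$; fixed sets of isometric involutions are totally geodesic, the $H_{i}$ meet orthogonally in $\H^{n}$, and near $\bar y$ the covering is a local isometry, so $B_{n}$ acts on $T_{\bar y}M'$ by the standard representation. Orientability of the $Y_{i}'$ (condition (6)) can be arranged by enlarging $\mathfrak a$ so as to land in the orientation cover, again $B_{n}$-invariantly. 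The delicate point is condition (4): a common point of $Y_{1}'\cap\cdots\cap Y_{n}'$ lifts to a point of $\gamma_{1}H_{1}\cap\cdots\cap\gamma_{n}H_{n}$ for some $\gamma_{i}\in\Gamma'$, and I must rule out any such intersection other than the $\Gamma'$-orbit of $\tilde y$. This is exactly the type of coincidence among translated totally geodesic hyperplanes excluded by Charney and Davis in \cite[Lemma 6.6]{RM} using torsion-freeness and the arithmetic of the congruence subgroup; the same finite list of forbidden algebraic relations is killed by taking $\mathfrak a$ large, so that enlarging the level yields the single-intersection-point property and the normal injectivity radius bound simultaneously. Combining these, $M'$ together with $\mathcal Y'=\{Y_{1}',\dots,Y_{n}'\}$, the $B_{n}$-action, and the geodesic $\gamma$ satisfies all the requirements.
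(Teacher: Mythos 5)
Your overall strategy coincides with the paper's: realize $M'$ as a congruence cover $\H^{n}/\Gamma'$ of $M^{n}$, reduce the normal injectivity radius bound to excluding from $\Gamma'$ the elements moving the axis $A$ within distance $2R$ of itself, note that these bad elements fall into finitely many double cosets $\langle g_{0}\rangle h_{i}\langle g_{0}\rangle$ by proper discontinuity, and use the congruence condition together with \cite[Lemma 6.6]{RM} so that the cover inherits properties (1)--(6) of Theorem \ref{arima}. Your discussion of $B_{n}$-equivariance and of the inherited conditions is actually more careful than the paper's one-line appeal to the congruence property, and your geometric criterion (embedded $R$-tube iff $d(A,hA)\geq 2R$ for all $h\in\Gamma'$ with $hA\neq A$) is correct.

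However, there is a genuine gap at the decisive step. The bad set $\{h\in\Gamma:\ hA\neq A,\ d(A,hA)<2R\}$ is a union of finitely many \emph{full} double cosets, since $d(A, g_{0}^{a}hg_{0}^{b}A)=d(A,hA)$ for all $a,b$. Arranging that the finitely many representatives $h_{i}$ are nontrivial modulo $\mathfrak{a}$ only guarantees $h_{i}\notin\Gamma(\mathfrak{a})$; it does not prevent some other element $g_{0}^{a}h_{i}g_{0}^{b}$ of the same double coset from lying in $\Gamma(\mathfrak{a})$, and any such element is a member of $\Gamma'$ with $d(A, g_{0}^{a}h_{i}g_{0}^{b}A)=d(A,h_{i}A)<2R$, destroying the tube. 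Since $\Gamma(\mathfrak{a})$ is normal, it meets $\langle g_{0}\rangle h_{i}\langle g_{0}\rangle$ exactly when the image $\bar{h}_{i}$ is a power of $\bar{g}_{0}$ in $\Gamma/\Gamma(\mathfrak{a})$, and ``$\bar{h}_{i}\neq 1$'' does not rule this out. The paper closes precisely this loophole with a commutator trick: it chooses the congruence subgroup $\Gamma'$ to avoid the finitely many commutators $[g,h_{i}]$ (each nontrivial because, with $g$ primitive, the centralizer of $g$ in the torsion-free discrete group $\Gamma$ is $\langle g\rangle$ while $h_{i}\notin\langle g\rangle$); if some $g^{a}h_{i}g^{b}$ lay in the normal subgroup $\Gamma'$, then $\bar{h}_{i}=\bar{g}^{-(a+b)}$ would commute with $\bar{g}$, forcing $[g,h_{i}]\in\Gamma'$, a contradiction. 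Your argument can be repaired either by adopting this trick, or by choosing $\mathfrak{a}$ so that $\bar{h}_{i}\notin\langle\bar{g}_{0}\rangle$ in the finite quotient---but the latter is a separability statement for the cyclic subgroup $\langle g_{0}\rangle$ that itself requires proof, whereas avoiding the finitely many nontrivial commutators follows directly from $\bigcap_{\mathfrak{a}}\Gamma(\mathfrak{a})=1$.
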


\begin{proof}
We start from the discrete cocompact lattice $\Gamma=SO(n, 1)\cap O(\phi)$ which is used to construct the arithmetic manifold $M^{n}$ in Theorem \ref{arima}. Choose a loxodromic isometry  $g\in \Gamma$,  and  let $A\subset \H^{n}$ be an axis  of $g$ such that $\gamma_{1}=A/ \langle g \rangle$ is a closed geodesic in $M^{n}=\H^{n}/ \Gamma$.  Let $C$ denote the $R$-neighborhood of $\gamma_{1}$ in $M^{n}$. There are only finitely many isometris $h_{i}\in \Gamma$ such that $h_{i}(C)\cap C\neq \emptyset$ where $i\in \{1, \cdots, m\}$ and $h_{i}\notin \langle g \rangle$. One can choose a congruence subgroup $\Gamma'< \Gamma$ which does not contain any of the finitely many commutators $[g, h_{i}]$. Then none of $h_{i}$ is a power of $g$ in the quotient $\Gamma/ \Gamma'$. The subgroup $\Gamma'$ does not contain any of the finitely many isometries $h_{i}$, but contains some power of $g$ (say $g^{k}$). Since the subgroup $\Gamma'$ is a congruence subgroup, the quotient manifold $M'=\H^{n}/ \Gamma'$ satisfies all the conditions in Theorem \ref{arima}. The closed geodesic $\gamma=A/ \langle g^{k} \rangle$ has normal injective radius at least $R$ in $M'$.



\end{proof}

By the same argument, we have:

\begin{corollary}
\label{coro:inj}
Given a constant $R>0$ and $m>0$, there exists a closed connected hyperbolic $n$-dimensional manifold $M'$ and closed geodesics $\gamma_1, \cdots, \gamma_{m}\subset M'$ such that their normal injective radii in $M'$ are all at least $R$, and $M'$ satisfies all the conditions in Theorem \ref{arima}. 
\end{corollary}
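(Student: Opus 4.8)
The plan is to run the proof of Proposition \ref{normalinj} in parallel for $m$ geodesics and then combine the finitely many congruence conditions into a single one. I would again start from the cocompact lattice $\Gamma = SO(n,1) \cap O(\phi)$ used to construct $M^n$ in Theorem \ref{arima}, and fix $m$ loxodromic elements $g_1, \dots, g_m \in \Gamma$ whose axes $A_1, \dots, A_m \subset \H^n$ project to distinct closed geodesics $A_j/\langle g_j\rangle$ in $M^n$. Loxodromic elements with this property are plentiful in a cocompact lattice, so this step poses no difficulty.

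For each $j$, I would let $C_j$ be the $R$-neighborhood of $A_j/\langle g_j\rangle$ in $M^n$ and, exactly as in the proposition, collect the finitely many $h \in \Gamma$ with $h \notin \langle g_j\rangle$ and $h(C_j) \cap C_j \neq \emptyset$; cocompactness makes this set finite. Writing $S \subset \Gamma \setminus \{1\}$ for the union over all $j$ of the corresponding commutators $[g_j, h]$, the one genuinely new step is to produce a single congruence subgroup $\Gamma' < \Gamma$ avoiding every element of $S$ at once. Finiteness of $S$ is what makes this possible: the principal congruence subgroups form a neighborhood basis of the identity in the congruence topology, so some principal $\Gamma'$ contains no element of $S$ while still containing a power $g_j^{k_j}$ of each $g_j$. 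Equivalently, one intersects the $m$ congruence subgroups supplied by Proposition \ref{normalinj} for the individual geodesics, using that a finite intersection of congruence subgroups is again a congruence subgroup of finite index.

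With $\Gamma'$ fixed the verification is identical to the single-geodesic case applied $m$ times: since $[g_j,h]\notin \Gamma'$ for each relevant $h$, no such $h$ is congruent to a power of $g_j$ modulo $\Gamma'$, so the only elements of $\Gamma'$ carrying $C_j$ into itself lie in $\langle g_j\rangle \cap \Gamma' = \langle g_j^{k_j}\rangle$; hence the $R$-neighborhood of $\gamma_j = A_j/\langle g_j^{k_j}\rangle$ embeds in $M' = \H^n/\Gamma'$ and $\gamma_j$ has normal injective radius at least $R$. To inherit all the conditions of Theorem \ref{arima} I would take $\Gamma'$ principal, hence torsion free and normalized by $B_n$, so that the $B_n$-action and the orthogonally intersecting totally geodesic $Y_i$ descend to $M'$. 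The main obstacle is really just this bookkeeping --- checking that the finitely many local ``bad commutator'' conditions, one finite set per geodesic, can be imposed simultaneously by a single $B_n$-invariant congruence subgroup satisfying Theorem \ref{arima}; since each constraint is already handled by Proposition \ref{normalinj} and there are only finitely many of them, no new analytic input is needed and the argument is genuinely ``the same.''
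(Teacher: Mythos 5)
Your proposal is correct and is essentially the argument the paper intends: the paper proves Corollary \ref{coro:inj} simply ``by the same argument'' as Proposition \ref{normalinj}, i.e.\ running that proof for all $m$ geodesics at once, and your observation that the finitely many commutator-avoidance conditions can be imposed simultaneously by a single congruence subgroup (a finite intersection of congruence subgroups being again a congruence subgroup) is exactly the bookkeeping that makes this work.
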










\section{The construction of Kleinian groups}
\label{sec:construction}

Let $W$ be the wedge of two circles whose fundamental group is the rank 2 free group $F_{2}$ generated by $v_{0}, v_{1}$. Let $a_{0}, a_{1}$ denote the two circles in $W$. There is a map 
$$f: W^{n}=\underbrace{W\times W \times \cdots \times W}_{n}\rightarrow T^{2n}$$
 where $T^{2n}$ is the $2n$-dimensional torus. Note that $W^{n}$ is a $n$-dimensional CAT(0) complex and the map $f$ induces  $f_{\ast}: \pi_{1}(W^{n})=F^{n}_{2}\rightarrow \pi_{1}(T^{2n})$.  Note that $f_{\ast}$ is  not injective since $F^{n}_{2}$ is not commutative.

 
We claim that  $W^{n}$ corresponds to an $n$-dimensional CAT(0) complex $Z$ embedded in the  arithmetic manifold $M$ in Theorem \ref{arima}. 

Recall that for the arithmetic manifold $M$, there is a smooth map $\phi: M\rightarrow T^{2n}$  such that $\mathcal{Y}$ is the transverse inverse image of the standard system of subtori in $T^{2n}$ \cite[Lemma 5.3]{RM}. Let $\Sigma_{i_{1}\cdots i_{n}}=\phi^{-1}(f(a_{i_{1}}\times a_{i_{2}}\times \cdots \times a_{i_{n}}))$ where $i_{j}\in \{0, 1\}$.  Note that $f(a_{i_{1}}\times \cdots \times a_{i_{n}})=T^{n}\subset T^{2n}$, and the submanifold $\Sigma_{i_1\cdots i_n}$ is an $n$-dimensional totally geodesic submanifold in $M$. There are  $2^{n}$ such $n$-dimensional submanifolds, and the intersection of any two such submanifolds is a totally geodesic submanifold $\phi^{-1}(T^{m})$ for some $0\leq m \leq n-1$. The intersection of all these $2^{n}$ submanifolds is a single point, denoted by $O\in M$. We let $Z$ denote the union of these $2^{n}$ totally geodesic submanifolds $\Sigma_{i_1\cdots i_n}$.


By the discussion above,  if  the subscripts of two submanifolds $\Sigma_{i_1\cdots i_n}$ and $\Sigma_{i'_{1}\cdots i'_{n}}$ are different except at the $k$th-entry, i.e. $i_j\neq i'_j$ for $j\neq k$, and $i_k=i'_k$. Then  the intersection of $\Sigma_{i_1\cdots i_n}$ and $\Sigma_{i'_{1}\cdots i'_{n}}$ is a closed geodesic, denote by $\gamma_k$ or $\gamma'_k$ depending on the $k$-entry is $0$ or $1$. Then we have $2n$ such closed geodesics and they intersect orthogonally with each other.  By Corollary \ref{coro:inj}, we assume that the normal injectivity radii of the $2n$ closed geodesics $\gamma_i, \gamma'_i$   are all  at least $3L$ where $L=2\cosh^{-1}(2\sqrt{2})+1$ in $M$  up to some finite index. The constant $L$ is the same constant as the one in \cite[Proposition 7.2]{KL} by letting $\theta=\pi/2$.

 

In analogue to the map $f$, there exists a map $f': Z \rightarrow M$ and an induced map $f'_{\ast}: \pi_{1}(Z)\rightarrow \pi_{1}(M)=\Gamma$. In contrast to the map $f_{\ast}$, we prove that $f'_{\ast}$ is injective. 




\medskip
\noindent
{\bf Proof of Theorem \ref{injective1}: }
Pick an element $\omega\in \pi_{1}(Z, O)$ corresponding to the geodesic loop $w$. Then we can write $\omega=\omega_{1}\omega_2\cdots \omega_{k}$ where $\omega_{i}$ is in $\pi_{1}(\Sigma_{i}, O)$ and $\Sigma_i$ is one of the $2^{n}$ submanifolds $\Sigma_{i_1\cdots i_n}$. Write the loop $w=w_1\ast w_2\ast \cdots \ast w_k$ where each $w_i$ is a geodesic loop in $\Sigma_{i}$ based on $O$. Now we prove that $f'_{\ast}(\omega)$ is not the identity for any nontrivial element $\omega$. 

Consider the universal cover $\H^{2n}$ of $M$, and a lift $\tilde{w}$  of the geodesic loop $w$ in $\H^{2n}$. The bi-infinite path $\tilde{w}=\tilde{w}_{1}\ast \tilde{w}_{2}\ast \cdots \tilde{w}_{k}\ast\cdots$ is a  piecewise geodesic path such that each segment $\tilde{w}_{i}$ is a geodesic segment in a lift $\tilde{\Sigma}_{i}$  of $\Sigma_{i}$. Note that two consecutive segments $\tilde{w}_{i}$ and $\tilde{w}_{i+1}$ meet at one lift $\tilde{O}_{i}$ of $O$, and the intersection of  the corresponding  lifts of $\Sigma_i$ and $\Sigma_{i+1}$ is either a single point $\tilde{O}_{i}$ or contains a geodesic which is a lift of one of the closed geodesics $\gamma_k, \gamma'_k$.  Moreover, these two lifts intersect orthogonally \cite[Corollary 6.2]{RM}.

 If the two lifts $\tilde{\Sigma}_{i}$ and $\tilde{\Sigma}_{i+1}$ intersect at a single point $\tilde{O}_{i}$, then the consecutive geodesic segments $\tilde{w}_{i}$ and $\tilde{w}_{i+1}$ meet at $\tilde{O}_{i}$ with angle $\pi/2$. If instead the intersection of these lifts contains a geodesic $\tilde{\gamma_{i}}$, the angle between $\tilde{w}_{i}$ and $\tilde{w}_{i+1}$ can be arbitrarily small as in Figure \ref{piecewisegeo}.  In this case, we need to replace $\tilde{w}_{i+1}$ by a new path which is homotopic to $\tilde{w}_{i+1}$. There are two cases depending on the intersection of the lifts $\tilde{\Sigma}_{i+1}$ and $\tilde{\Sigma}_{i+2}$ (a lift where $\tilde{w}_{i+2}$ lies in):

Case (1):  Suppose that $\tilde{\Sigma}_{i+1}$ intersects $\tilde{\Sigma}_{i+2}$ at a single point $\tilde{O}_{i+1}$. Then we replace $\tilde{w}_{i+1}$ by the path $\tilde{O}_{i}a_{i+1}\ast a_{i+1}\tilde{O}_{i+1}$ where $\tilde{O}_{i+1}a_{i+1}$ is perpendicular to $\tilde{\gamma}_{i}$ at $a_{i+1}$. 

Case (2): The intersection of $\tilde{\Sigma}_{i+1}$ and $\tilde{\Sigma}_{i+2}$ contains a geodesic $\tilde{\gamma}_{i+1}$. Note that both  $\tilde{O}_{i+1}$ and $\tilde{O}_{i}$ are lifts of $O$, so there exists an element $g\in \Gamma$ such that $\tilde{O}_{i+1}=g(\tilde{O}_{i})$. Consider the geodesic $g(\tilde{\gamma}_{i})$. Then $\tilde{\gamma}_{i+1}$ either is identified with $g(\tilde{\gamma}_{i})$ or intersects $g(\tilde{\gamma}_{i+1})$ orthogonally at $\tilde{O}_{i+1}$.  Let $a_{i}b_{i}$ denote the shortest geodesic which is othorgonal to both $\tilde{\gamma}_{i}$ and $g(\tilde{\gamma}_{i})$. Then we replace $\tilde{w}_{i+1}$ by $\tilde{O}_{i}a_{i}\ast a_{i}b_{i}\ast b_{i}\tilde{O}_{i+1}$,  which is homotopic to $\tilde{w}_{i+1}$. Note that the distance between $\tilde{\gamma}_{i}$ and $g(\tilde{\gamma}_{i})$ is at least $3L$ by construction. Hence the length of $a_{i}b_{i}$ is at least $3L$. By repeating this process for each segment $\tilde{w}_{i}$,  we replace the bi-infinite path $\tilde{w}$ by a new piecewise geodesic path $\tilde{w}'$ which is homotopic to $\tilde{w}$. We claim that $\tilde{w}'$ is a quasi-geodesic. By the Morse lemma, the isometry $\omega$ represented by $\tilde{w}'$ and $\tilde{w}$ is nontrivial. 


Observe that the piecewise geodesic path $\tilde{w}'$ contains long geodesics which arise from the large normal injectivity radii of the geodesics $\gamma_k, \gamma'_k$.  The remaining geodesic segments  might be very short. Note that every two consecutive geodesic segments meet at the angle $\pi/2$ by the construction. These short geodesic segments locally look like the ones (e.g. $b_i\tilde{O}_{i+1}, \tilde{O}_{i+1}a_{i+1}$) in Figure \ref{piecewisegeo}. In the figure, the green segments denote the long segments while the red ones denote the short segments. Note that the length of the red segment could be zero depending on the intersection of the lifts $\tilde{\Sigma}_{i+1}$ and $\tilde{\Sigma}_{i+2}$. 

\begin{figure}[H]
\centering
\includegraphics[width=3.0in]{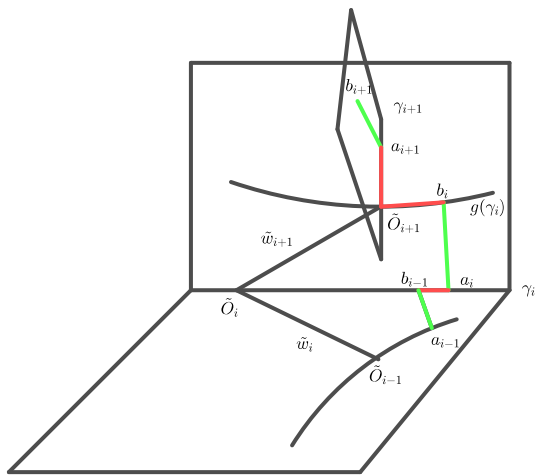}
\caption{peicewise geodesic path \label{piecewisegeo}}
\end{figure}

Suppose that the length of the long geodesic segments $3L\leq d(a_{i}, b_{i})< 6L$. Actually if $d(a_{i}, b_{i})\geq 6L$, take the point $a_{i1}\in a_{i}b_{i}$ such that $d(a_{i}, a_{i1})=3L$. If $d(a_{i1}, b_{i})\geq 6L$, we continue the process until we get $a_{ij}\in a_{i}b_{i}$ such that $3L\leq d(a_{ij}, b_{i})<6L$. Thus we get a new partition of the piecewise geodesic path $\tilde{w}'$ such that the long geodesic segment has length in $[3L, 6L)$, and consecutive arcs meet either at the angle $\pi$ or the angle $\pi/2$. For the short arcs, if the length is $\geq L$, the path $\tilde{w}'$ is a uniform quasi-geodesic by \cite[Proposition 7.2]{KL}. Hence we assume that the lengths of some short segments are $<L$. 

In order to prove that $\tilde{w}'$ is $(A, B)$-quasigeodesic, with $A\geq 1$ and $B\geq 0$, we need to verify the inequality that 
$$\dfrac{1}{A} length(\tilde{w}'\mid_{[t_{a}, t_{b}]})-B\leq d(a, b)\leq A\cdot length(\tilde{w}'\mid_{[t_{a}, t_{b}]})+B$$
for all pair of points $a, b\in \tilde{w}'$ where $\tilde{w}'(t_{a})=a$ and $\tilde{w}'(t_{b})=b$. The upper bound (for arbitrary $A\geq 1$ and $B\geq 0$) follows from the triangle inequality and we only need to establish the lower bound. 

 Consider the subpath $a_{i}b_{i}\ast b_{i}\tilde{O}_{i+1}\ast \tilde{O}_{i+1}a_{i+1}\ast a_{i+1}b_{i+1}$ with long segments $a_{i}b_{i}$ and $a_{i+1}b_{i+1}$. By the triangle inequality, 
 $$d(a_{i}, a_{i+1})\geq 3L-L-L=L.$$
 Observe that $a_{i+1}b_{i+1}$ and $a_{i}a_{i+1}$ meet at $a_{i+1}$ with angle $\pi/2$. The bisectors of the arc $a_{i+1}b_{i+1}$ and $a_{i}a_{i+1}$ are at least distance $2$ apart by the similar argument of \cite[Proposition 7.2]{KL}. 
 
Suppose that the points $a=\tilde{w}'(t_{a}), b=\tilde{w'}(t_{b})$ in $\tilde{w'}$ are terminal points of geodesic segments $\tilde{w'}_{i}, \tilde{w'}_{j}, i<j$. Note that $\tilde{w}'|_{[t_a, t_b]}$ contains at least $(j-i-4)/3$ long geodesic segments like $a_{i}b_{i}$ in Figure \ref{piecewisegeo}. If two consecutive segments are both long segments, by the proof of  \cite[Proposition 7.2]{KL}, their bisectors are at least  distance $2$ apart. If there are $2$ short geodesic segments (e.g. $b_{i}\tilde{O}_{i+1}, \tilde{O}_{i+1}a_{i+1}$) lying between the long geodesic segments (e.g. $a_{i}b_{i}$ and $a_{i+1}b_{i+1}$), then we consider the bisectors of $a_{i}a_{i+1}$ and $a_{i+1}b_{i+1}$ which are also at least distance 2 apart. Every pair of these bisectors  divides $ab$ into a small segment with length  at least $2$. By adding these lengths together, we obtain the inequality 
 $$d(a, b)\geq \dfrac{2}{6}(j-i-4),$$
 while 
 $$length(\tilde{w}'|_{[t_{a}, t_{b}]})\leq 6(j-i+1)L.$$
 putting these inequalities together, we obtain
 $$d(a, b)\geq \dfrac{1}{18L} length(\tilde{w}'|_{[t_{a}, t_{b}]})-4.$$
 Lastly, for general points $a, b\in \tilde{w}'_{i}, \tilde{w}'_{j}$, they are within distance $<6L$ from the terminal endpoints of $a', b'$ of these segments.  Hence, 
 $$d(a, b)\geq d(a', b')-12L\geq \dfrac{1}{18L} length (\tilde{w}'|_{[t_{a'}, t_{b'}]})-4-12L\geq \dfrac{1}{18L} length(\tilde{w}'|_{[t_{a}, t_{b}]})-(4+12L).$$

\qed

\section{The limit set of the Kleinian group}
\label{sec:limitset}

By Theorem \ref{injective1},  $\Gamma_{n}=\pi_{1}(Z)$ is a Kleinian group. In this  section, we study the properties of the limit set $\Lambda(\Gamma_{n})$. Recall that $Z$ is a $CAT(0)$-complex, and we let $\tilde{Z}$ denote its universal cover which is the union of the  lifts of the $n$-dimensional  submanifolds $\Sigma_{i_1\cdots i_n}$ in $\H^{2n}$.

Given a point $\tilde{O}\in \tilde{Z}$, two geodesic rays $\rho_{1}$ and $\rho_{2}$ in $\tilde{Z}$ are called \emph{asymptotic} if they are at finite Hausdorff distance. The \emph{ideal boundary} of the metric space $\tilde{Z}$ is the collection of equivalence classes of geodesic rays, and we denote it by $\partial Z_{CAT(0)}$.


The visual topology $\tau'_{\tilde{O}}$ on $\partial Z_{CAT(0)}$ is generated by the basis of neighborhoods
$$\{ N'(\rho, \epsilon, R)\mid \rho \in \partial_{\tilde{O}}Z, \epsilon>0, R>0\},$$
where 
$$N'(\rho, \epsilon , R)=\{\rho': d(\rho(R), \rho'(R))<\epsilon \}, \textup{ with } R\gg 1, \epsilon \ll 1$$ 
and
$$ \partial_{\tilde{O}}Z: =\{\rho: \rho \textup{ is a geodesic ray in } \tilde{Z} \textup{ with } \rho(0)=\tilde{O}\}. $$

Fix a lift $\tilde{O}$ of $O$ in $\tilde{Z}$. Consider the lifts of the $n$-dimensional submanifolds $\Sigma_{i_1\cdots i_{n}}$ passing through $\tilde{O}$. Each lift is one copy of the $n$-dimensional plane $\H^{n}$ whose visual boundary is $S^{n-1}$. We let $S_{i_1\cdots i_n}$ denote the visual boundary of the lift of $\Sigma_{i_1\cdots i_n}$ passing through $\tilde{O}$. Let $\mathbb{S}$ be the union of the $2^{n}$ spheres $S_{i_1\cdots i_n}$. Then we have 
$$\partial Z_{CAT(0)}=\overline{\Gamma(\mathbb{S})}=\Gamma(\mathbb{S})\cup E$$
where $E$ denote the remaining points not in $\Gamma(\mathbb{S})$.

Each point in the visual boundary corresponds to a geodesic ray $\rho$ emanating from $\tilde{O}$. By the construction of the universal cover, the geodesic rays travel along the lifts of the submanifolds $\Sigma_{i_1\cdots i_n}$. The points in $\Gamma(\mathbb{S})$  correspond to the geodesic rays that stay in one lift after some time $t$. Otherwise, if the geodesic rays keep travelling along different lifts as $t\rightarrow \infty$, the endpoints lie in $E$.



There is a natural surjective map $i: \partial Z_{CAT(0)}\rightarrow \partial Z_{\H^{2n}}$ where $\partial Z_{\H^{2n}}$ denotes the visual boundary of $\tilde{Z}$ embedded in $\H^{2n}$, and actually this is the same as the limit set $\Lambda(\Gamma_{n})$ by Theorem \ref{injective1}. We first prove that $\partial  Z_{CAT(0)}$ cannot be embedded in $\partial \H^{m}$ for any $m<2n$, see Theorem \ref{obsemb}. We also compare $\partial Z_{CAT(0)}$ and $\partial Z_{\H^{2n}}$, proving that $i$ is a homeomorphism, see Theorem \ref{injective2}.

\medskip
\noindent
{\bf Proof of Theorem \ref{obsemb}: }Recall that a finite graph is planar if and only if it does not contain a subgraph that is a subdivision of the complete graph $K_{5}$ or the complete biparite graph $K_{3, 3}$, which is known as Kuratowski's theorem. In general, the complex $\ast^{n}K_{3}$ which is the join of $n$ copies of three points $K_3$  cannot be embedded in $\mathbb{R}^{2n-2}$, see  \cite[Lemma 9]{BKK}. It suffices to prove that the limit set $\Lambda(\Gamma_{n})$ contains the complex $\ast^{n} K_{3}$.

 We first consider the case that $n=2$. Recall that every lift of the surface $\Sigma_{i_{1}i_{2}}$ is a copy of the $2$-dimensional plane $\H^{2}$ with ideal boundary $S^{1}$ where $i_{j}\in \{0, 1\}$. Let $\tilde{O}$ denote one lift of $O$. The configuration of the ideal boundary of the lifts of the four surfaces $\Sigma_{i_{1}i_2}$ passing through $\tilde{O}$ is shown as in Figure \ref{circles}, and the limit set $\Lambda(\Gamma_{2})$ contains this configuration. It is not hard  to see that  in Figure \ref{circles}, the vertices $A, B, C, D, E, F$ consist of a complete biparite graph $K_{3, 3}$, i.e. $K_{3}\ast K_{3}$, hence, it is not planar. Therefore, $\Lambda(\Gamma_{2})$ cannot be embedded in $R^{2}$. 
 
 We next use the induction on $n$ to show that the lifts of the $2^{n}$ $n$-dimensional submanifolds $\Sigma_{i_1\cdots i_n}$ passing through $\tilde{O}$ contains the subcomplex $*^{n}K_3$. Assume the claim holds for $n-1$. Recall the lifts of an $n$-dimensional submanifold $\Sigma_{i_{1}\cdots i_{n}}$ are copies of the $n$-dimensional planes $\H^{n}$ with ideal boundary $S^{n-1}$ where $i_{j}\in \{ 0, 1\}$. Consider the ideal boundary $S_{i_1\cdots i_{n-1}0}$ of the lifts of the submanifolds $\Sigma_{i_1\cdots i_{n-1}0}$ passing through $\tilde{O}$. By the construction of the complex $Z$ in Section \ref{sec:construction}, the intersection $\bigcap S_{i_1\cdots i_{n-1}0}$ consists of two points $A, A'$ which are the endpoints of  the lift of closed geodesic $\gamma_{n}$ passing through $\tilde{O}$. By the assumption of the induction, $\bigcup S_{i_{1}\cdots i_{n-1}0}$ contains $\ast^{n-1} K_{3}$, which indicates that it also contains the set $\ast^{n-1}K_{3}* \{ A, A' \}$. By the same reason, $\bigcap S_{i_{1}\cdots s_{n-1} 1}$ consists of two points $B, B'$ which are the endpoints of the lift of the closed geodesic $\gamma'_{n}$ passing through $\tilde{O}$,  and $\bigcup S_{i_{1}\cdots i_{n-1}1}$ contains the complex $\ast^{n-1} K_{3}$. Hence, $\bigcup S_{i_{1}\cdots i_{n}}$ contains $\ast^{n-1}K_{3} * \{ A, A' , B, B' \}$, therefore it contains the complex $\ast^{n} K_{3}$.

\qed

\begin{figure}[H]
\centering
\includegraphics[width=3.0in]{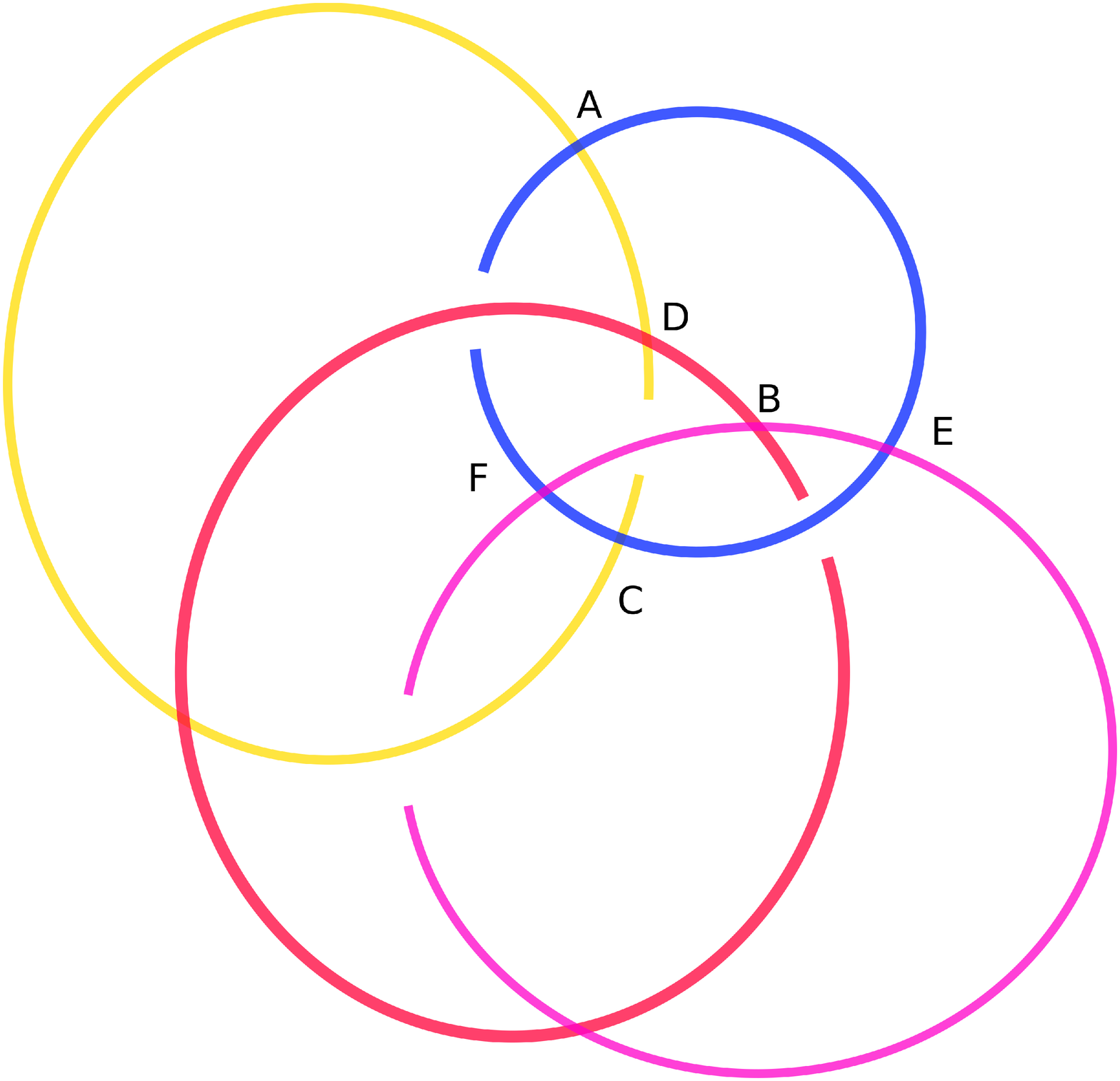}
\caption{local link  \label{circles}}

\end{figure}

\begin{theorem}
\label{injective2}
The map $i: \partial Z_{CAT(0)}\rightarrow \partial Z_{\H^{2n}}$ is homeomorphic.  
\end{theorem}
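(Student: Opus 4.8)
The plan is to show that $i$ is a continuous bijection between compact Hausdorff spaces and then conclude that it is a homeomorphism automatically. Both $\partial Z_{CAT(0)}$ and $\partial Z_{\H^{2n}}=\Lambda(\Gamma_{n})$ are compact: the former because $\tilde{Z}$ is a proper CAT(0) space, the latter because a limit set is a closed subset of the compact sphere $\geo\H^{2n}$. Two facts established earlier drive everything. First, the inclusion $\tilde{Z}\hookrightarrow\H^{2n}$ is $1$-Lipschitz from the intrinsic (CAT(0)) metric to the hyperbolic metric, since any path in $\tilde{Z}$ is a path in $\H^{2n}$, so $d_{\H^{2n}}\leq d_{\tilde{Z}}$. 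Second, every CAT(0) geodesic ray in $\tilde{Z}$ maps to a uniform $(A,B)$-quasi-geodesic in $\H^{2n}$, which is exactly the content of the estimate proving Theorem \ref{injective1}. Since each boundary point of a complete CAT(0) space is represented by a unique ray from the fixed basepoint $\tilde{O}$, I identify $\partial Z_{CAT(0)}$ with $\partial_{\tilde{O}}Z$ and view $i$ as sending a ray $\rho$ to the endpoint in $\geo\H^{2n}$ of its quasi-geodesic image, which is well defined by the Morse lemma.

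For continuity, suppose $\rho_{k}\to\rho$ in the visual topology, so that $d_{\tilde{Z}}(\rho_{k}(R),\rho(R))\to 0$ for every fixed $R$. The $1$-Lipschitz inclusion gives $\rho_{k}(R)\to\rho(R)$ in $\H^{2n}$ as well. By the Morse lemma with the uniform constants $(A,B)$, the direction from $\tilde{O}$ to $\rho(R)$ approximates $i(\rho)$ to within an error $\eta(R)$ that tends to $0$ as $R\to\infty$, uniformly over all such quasi-geodesics (here one uses that $d_{\H^{2n}}(\tilde{O},\rho(R))\geq R/A-B\to\infty$ and that $\rho(R)$ stays within a uniform distance of the hyperbolic geodesic $[\tilde{O},i(\rho))$); the same holds for each $\rho_{k}$. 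Fixing $R$ large and then letting $k\to\infty$ makes the directions to $\rho_{k}(R)$ and $\rho(R)$ agree to arbitrary precision, so $i(\rho_{k})\to i(\rho)$.

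The \textbf{main obstacle is injectivity}. Here I argue the contrapositive: if $\rho_{1}\neq\rho_{2}$ as rays from $\tilde{O}$, then their images have distinct endpoints. Since distinct rays in a CAT(0) space are non-asymptotic, convexity of the metric forces the intrinsic distance $d_{\tilde{Z}}(\rho_{1}(t),\rho_{2}(\mathbb{R}_{\geq 0}))\to\infty$. Because both images are uniform quasi-geodesic rays from $\tilde{O}$, two such rays share an endpoint if and only if they stay within bounded Hausdorff distance in $\H^{2n}$ (Morse lemma); thus it suffices to promote the intrinsic divergence to hyperbolic divergence, $d_{\H^{2n}}(\rho_{1}(t),\rho_{2}(\mathbb{R}_{\geq 0}))\to\infty$. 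This is the one step where the $1$-Lipschitz bound runs the wrong way, and it is exactly where the large normal injectivity radius enters. The rays eventually travel through different lifts of the submanifolds $\Sigma_{i_{1}\cdots i_{n}}$; two distinct lifts meet only in a lift of some $\gamma_{k},\gamma'_{k}$ (a geodesic) or in a single point, and because these geodesics have normal injectivity radius at least $3L$, a totally geodesic lift of $\Sigma_{i_{1}\cdots i_{n}}$ peels away from any other lift at a definite rate as one moves away from the intersection. I would make this quantitative to show that once a long geodesic segment of $\rho_{1}$ (of length in $[3L,6L)$, as in the proof of Theorem \ref{injective1}) lies in a lift not containing the corresponding portion of $\rho_{2}$, the hyperbolic distance between the two rays is bounded below by a growing function of $t$, giving the required hyperbolic divergence and hence distinct endpoints.

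Finally, surjectivity of $i$ onto $\Lambda(\Gamma_{n})=\partial Z_{\H^{2n}}$ is already recorded, and can be seen by writing $\xi=\lim z_{k}$ with $z_{k}\in\tilde{Z}$, passing to a limiting direction of the CAT(0) rays $[\tilde{O},z_{k}]$, and using the Morse lemma to identify the endpoint of the limit ray with $\xi$. With $i$ a continuous bijection from the compact space $\partial Z_{CAT(0)}$ to the Hausdorff space $\partial Z_{\H^{2n}}$, it is a homeomorphism. I expect the delicate point to be the quantitative separation estimate for lifts in the injectivity step; the continuity and compactness arguments are routine once the uniform quasi-geodesic property and the $1$-Lipschitz inclusion are in hand.
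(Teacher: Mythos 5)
Your overall skeleton (continuous bijection from a compact space to a Hausdorff space, driven by the uniform quasi-geodesic property from Theorem \ref{injective1} plus the Morse lemma) matches the paper's framing, and your continuity/compactness reasoning is an acceptable equivalent of the paper's direct argument that $i^{-1}$ is continuous. But the proof has a genuine gap exactly where you flag ``the main obstacle'': injectivity is never actually proven. You reduce it to promoting intrinsic divergence $d_{\tilde{Z}}(\rho_1(t),\rho_2)\to\infty$ to hyperbolic divergence $d_{\H^{2n}}(\rho_1(t),\rho_2)\to\infty$, and then defer the key quantitative ``peeling'' estimate (``I would make this quantitative\dots''). Worse, the mechanism you propose cannot cover all cases: it requires that eventually a long segment of $\rho_1$ lies in a lift not containing the corresponding portion of $\rho_2$, but two distinct CAT(0) rays from $\tilde{O}$ can traverse the \emph{same} sequence of lifts for all time (the paper's last case, where both rays are of type 2 and travel along the same lifts), or can eventually settle into two intersecting lifts. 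In those situations no lift separation ever occurs, so your peeling argument says nothing, and the divergence must instead be extracted from behavior within common lifts. A second, smaller inaccuracy: Theorem \ref{injective1} does not show that CAT(0) rays themselves are uniform $(A,B)$-quasi-geodesics in $\H^{2n}$; consecutive segments of a ray crossing a lifted geodesic $\tilde{\gamma}_i$ can meet at arbitrarily small angle, and the quasi-geodesic in that proof is a surgered path homotopic to the lift, with the original ray only lying within uniform distance of it. (The paper makes the same identification ``without loss of generality,'' but your phrasing treats it as the literal content of the earlier estimate.)

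The paper's actual injectivity argument sidesteps the divergence-promotion problem entirely, and this is the idea your proposal is missing: given two rays $\rho_1,\rho_2$ with allegedly equal endpoints, concatenate (a reversal of) $\rho_1$ with $\rho_2$ through a short connecting segment $ab$ at the lift where they part ways, obtaining a single bi-infinite piecewise geodesic path $\rho_3$; then perform the same segment-replacement surgery as in the proof of Theorem \ref{injective1} to produce a uniform quasi-geodesic $\rho'_3$ at bounded distance from $\rho_3$. The Morse lemma then gives $\rho'_3(\infty)\neq\rho'_3(-\infty)$, i.e.\ $\rho_1(\infty)\neq\rho_2(\infty)$, with no need for any lower bound on $d_{\H^{2n}}(\rho_1(t),\rho_2)$; the remaining cases (both rays eventually in lifts, or traversing the same lifts forever) are handled by $\delta$-hyperbolicity and a Morse-lemma contradiction with non-asymptoticity in the CAT(0) metric. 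To repair your proposal you would either need to carry out the concatenation argument, or genuinely prove the quantitative separation estimate \emph{and} supply a separate argument for the same-lifts cases; as written, the central step of the theorem is a plan rather than a proof.
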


\begin{proof}


We first prove that the  map $i$ is injective. Consider two  different geodesic rays $\rho_{1}$ and $\rho_{2}$ with different endpoints $\xi_{1}, \xi_{2}\in \partial Z_{CAT(0)}$.   We first suppose that  both $\rho_{1}(t), \rho_{2}(t)$ keep staying  in some lifts of the submanifolds $\Sigma_{i_1\cdots i_n}$, i.e. $\rho_{1}|_{[t, \infty]}, \rho_{2}|_{[t, \infty]}$ stay in some copies of hyperbolic planes $\H^{n}$,  respectively. Suppose that $\xi_{1}=\xi_{2}\in \partial \H^{2n}$. Then the intersection of these two lifts is non-empty, and we let $A$ denote one intersection point. Then the geodesic ray $A\xi$ lies in both of the lifts. By the $\delta$-hyperbolicity, there exists a constant number $K>0$ such that the geodesic rays $\rho_{1}|_{[t, \infty]}$, $\rho_{2}|_{[t, \infty]}$ are within the $K$-neighborhoods of $A\xi$.  Hence, the Hausdorff distance of $\rho_{1}|_{[t, \infty]}$ and $\rho_{2}|_{[t, \infty]}$ in the CAT(0) complex $Z$ is bounded by $2K$ which contradicts to our assumption that $\xi_{1}\neq \xi_{2}\in \partial Z_{CAT(0)}$. 


Now we consider the case that  $\rho_{1}$ keeps travelling along different lifts as $t\rightarrow \infty$. Suppose that there exists a lift $P_0$ such that the intersections $\rho_1\cap P_0$ and $\rho_{2}\cap P_0$ are nonempty and $\rho_1, \rho_2$ won't stay in the same lift after $P_0$. Let $t_{0}$ denote the time when the geodesic ray $\rho_1$ starts to enter another lift $P_1$ different from $P_{0}$. Note that $\rho_2$ may enter another lift $P_{2}\neq P_1$ which we call  type 2 or stay in the same lift $P_0$ for the rest of the time which we call type 1.  We claim that  in both cases,  we form a new bi-infinite piecewise geodesic path which is a quasi-geodesic with endpoints $\xi_1, \xi_2\in \partial \H^{2n}$. By the Morse lemma, $\xi_1\neq \xi_2$ in $\partial \H^{2n}$. 

The intersection of $P_1$ and $P_0$ is either a point or contains a geodesic $\tilde{\gamma}_i$ as in the proof of Theorem \ref{injective1}. Assume that $\rho_{2}$ is type 1. Then we make a new bi-infinite piecewise geodesic path 
$$\rho_{3}=\rho_{1}|_{[t_{0}, \infty)}\ast ab \ast b\rho_{2}(\infty)$$
where $a=\rho_{1}(t_{0})$ and $b$ is the unique intersection point of $P_0$ and $P_1$ or $b\rho_{2}(\infty)$ meets $\tilde{\gamma}_i$ orthogonally at $b$. By replacing the segments in $\rho_3$ as what we do in Theorem \ref{injective1}, we get the new path $\rho'_3$ which is a uniform quasi-geodesic. By $\delta$-hyperbolicity of the lifts, $\rho_3$ is within bounded neighborhood of $\rho'_3$. Hence, $\rho_{3}(\infty)=\rho'_{3}(\infty)$ and $\rho_{3}(-\infty)=\rho'_{3}(-\infty)$. By the Morse lemma, $\rho'_{3}(\infty)$ is different from $\rho'_{3}(-\infty)$ which means that $\rho_{1}(\infty)$ is different from $\rho_{2}(\infty)$. 

If $\rho_{2}$ is type 2, assume that $\rho_{2}(t'_{0})$ is the starting point of the geodesic segment in $P_{2}$. We form an bi-infinite piecewise geodesic path
$$\rho_{3}=\rho_{1}|_{[t_{0}, \infty)}\ast ab \ast \rho_{2}|_{[t'_{0}, \infty)}$$
where $a=\rho_{1}(t_{0})$ and $b=\rho_{2}(t'_{0})$. By the similar argument above, we have  a new bi-infinite piecewise geodesic path $\rho'_3$ which is a quasi-geodesic and $\rho'_3(\infty)=\rho_3(\infty), \rho'_3(-\infty)=\rho_3(\infty)$. Hence, the two endpoints of $\rho_1$ and $\rho_{2}$ are different. 

We last check the case that both the geodesic rays $\rho_{1}, \rho_{2}$ are type 2, and they travel along the same lifts for all $t\in [0, \infty)$. By the similar argument to the previous case, there are  piecewise geodesic paths $\rho'_{1}, \rho'_{2}$ which are both quasi-geodesics in $\H^{2n}$ such that $\rho_{i}$ is within a uniform bounded neighborhood of $\rho'_i$ for $i=1, 2$. Hence, $\rho'_{1}(\infty)=\rho_{1}(\infty)$ and $\rho'_{2}(\infty)=\rho_{2}(\infty)$.  If $\xi_{1}=\xi_{2}$ in $\partial X_{\H^{2n}}$, then $\tilde{O}\xi_{1}=\tilde{O}\xi_{2}$, and the Hausdorff distance between $\rho'_{1}$ and $\rho'_{2}$ is also uniformly bounded by the Morse lemma. Therefore, there exist sufficiently large  time $t_{1}, t_{2}, t'_{1}, t'_{2}$ such that $\rho_{1}|_{[t_{1}, t_{2}]}$ and $\rho_{2}|_{[t'_{1}, t'_{2}]}$ both lie in the same lift and the Hausdorff distance between these segments in the CAT(0)-complex is uniformly bounded which contradicts to the assumption that $\rho_{1}$ and $\rho_{2}$ are geodesic rays which are not equivalent in the  CAT(0)-space. 

We have proved that the map $i$ is one-to-one. It suffices to prove that the inverse map $i^{-}: \partial Z_{\H^{2n}}\rightarrow \partial Z_{CAT(0)}$ is continuous in order to see that $i$ is a homeomorphism since both $\partial Z_{\H^{2n}}$ and  $Z_{CAT(0)}$ are compact sets. We briefly recall the visual topology $\tau'_{\tilde{O}}$ on $\partial Z_{CAT(0)}$, which is generated by the basis of neighborhood 
$$\{ N'(\rho, \epsilon, R)\mid \rho \in \partial_{\tilde{O}}Z, \epsilon>0, R>0\}.$$

One similarly defines the topology $\tau_{\tilde{O}}$ on $\partial Z_{\H^{2n}}$. Without loss of generality, we assume that the rays $\rho(t)$ in the CAT(0) space $Z$ are uniform quasi-geodesics in $\H^{2n}$ and they are within  uniform neighborhoods of geodesic rays $\rho(0)\rho(\infty)$. This means that $i$ maps $N'(\rho, \epsilon, R)$ to an open set $N(\rho(0)\rho(\infty), \epsilon', R)$ in $\partial Z_{\H^{2n}}$, which indicates that $i^{-}$ is continuous.

\end{proof}

\begin{corollary}
\label{topdim}
The topological dimension of $\Lambda(\Gamma_n)$  equals $n-1$. 
\end{corollary}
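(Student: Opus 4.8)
The plan is to establish the two inequalities $\dim \Lambda(\Gamma_{n}) \geq n-1$ and $\dim \Lambda(\Gamma_{n}) \leq n-1$ separately, where $\dim$ denotes topological (covering) dimension. Throughout I use that covering dimension is monotone under passage to subspaces of a separable metric space; this applies because $\Lambda(\Gamma_{n}) \subset \partial \H^{2n} = S^{2n-1}$ is a compact metric space.

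For the lower bound, I would observe that the ideal boundary $S_{i_{1}\cdots i_{n}}$ of a single lift of the totally geodesic $n$-dimensional submanifold $\Sigma_{i_{1}\cdots i_{n}}$ is a round $(n-1)$-sphere, and that $S_{i_{1}\cdots i_{n}} \subseteq \mathbb{S} \subseteq \Lambda(\Gamma_{n})$. Since $\dim S^{n-1} = n-1$, monotonicity immediately gives $\dim \Lambda(\Gamma_{n}) \geq n-1$. Equivalently, the join $\ast^{n} S^{0} \subseteq \ast^{n} K_{3} \subseteq \Lambda(\Gamma_{n})$ produced in the proof of Theorem \ref{obsemb} already supplies an embedded copy of $S^{n-1}$.

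For the upper bound, I would pass to the CAT(0) boundary. By Theorem \ref{injective2} the map $i$ is a homeomorphism, so $\Lambda(\Gamma_{n}) = \partial Z_{\H^{2n}} \cong \partial Z_{CAT(0)}$, and it suffices to bound $\dim \partial Z_{CAT(0)}$. The group $\Gamma_{n} = F^{n}_{2}$ acts geometrically on the CAT(0) complex $\tilde{Z}$ by deck transformations (the action is proper, cocompact since $Z$ is compact, and isometric). It also acts geometrically on the universal cover of the Salvetti complex $W^{n}$, which is the product of $n$ copies of the $4$-valent tree $T$, the Cayley complex of $F_{2}$. Since the boundary of $F^{n}_{2}$ is well-defined independently of the CAT(0) space on which the group acts geometrically \cite{Rua}, one obtains a homeomorphism
$$\partial Z_{CAT(0)} \cong \underbrace{\partial T \ast \cdots \ast \partial T}_{n},$$
the $n$-fold join of the Cantor set $\partial T$.

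Finally, I would invoke the standard join inequality $\dim(A \ast B) \leq \dim A + \dim B + 1$ for compact metric spaces, applied inductively over the $n$ factors. Each $\partial T$ is a Cantor set of dimension $0$, so $\dim \partial Z_{CAT(0)} \leq n-1$; combined with the lower bound this yields $\dim \Lambda(\Gamma_{n}) = n-1$. The step I expect to be the main obstacle is making the identification $\partial Z_{CAT(0)} \cong \ast^{n}(\text{Cantor set})$ fully rigorous: one must verify that $\Gamma_{n}$ genuinely acts geometrically on $\tilde{Z}$ and that the hypotheses under which \cite{Rua} guarantees a well-defined (homeomorphism-type) boundary do apply to $F^{n}_{2}$, so that the two CAT(0) models have homeomorphic boundaries rather than merely boundaries agreeing up to homotopy. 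The remaining ingredients, namely monotonicity of covering dimension and the join inequality, are classical.
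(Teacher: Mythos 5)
Your lower bound is correct, and it is a point the paper leaves implicit: each sphere $S_{i_{1}\cdots i_{n}}$ is the limit set of the subgroup $\pi_{1}(\Sigma_{i_{1}\cdots i_{n}})<\Gamma_{n}$ (injective by Theorem \ref{injective1}, since $\Sigma_{i_{1}\cdots i_{n}}$ is totally geodesic), so $\Lambda(\Gamma_{n})$ contains a round $S^{n-1}$ and monotonicity of covering dimension in separable metric spaces gives $\dim \Lambda(\Gamma_{n})\geq n-1$. Your reduction of the upper bound to $\partial Z_{CAT(0)}$ via Theorem \ref{injective2} also matches the paper's strategy.

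The upper bound itself, however, rests on a false identification: $\Gamma_{n}=\pi_{1}(Z)$ is \emph{not} $F^{n}_{2}$, and $\tilde{Z}$ is not a CAT(0) model for $F^{n}_{2}$. The complex $Z$ is not the Salvetti complex $W^{n}$; it is the union of the $2^{n}$ closed totally geodesic $n$-manifolds $\Sigma_{i_{1}\cdots i_{n}}=\phi^{-1}(T^{n})\subset M^{2n}$. The whole point of the hyperbolization step is that each $n$-torus cell $a_{i_{1}}\times\cdots\times a_{i_{n}}$ of $W^{n}$ gets replaced by a closed \emph{hyperbolic} $n$-manifold, whose fundamental group is a cocompact lattice in $\Isom(\H^{n})$ --- this is exactly why the lifts appearing in the proofs of Theorems \ref{injective1} and \ref{obsemb} are copies of $\H^{n}$ with ideal boundary spheres $S^{n-1}$. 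Thus $\pi_{1}(Z)$ is an amalgam (complex of groups) of $2^{n}$ such lattices glued along their totally geodesic intersections, not a product of free groups; for instance, for $n=2$ it contains closed hyperbolic surface groups, and by the Baumslag--Roseblade theorem on finitely presented subgroups of $F_{2}\times F_{2}$ such a group is not isomorphic to $F_{2}\times F_{2}$. Consequently Ruane's rigidity theorem \cite{Rua} for $F^{n}_{2}$ does not apply, and your conclusion $\partial Z_{CAT(0)}\cong \ast^{n}(\textup{Cantor set})$ is in fact false: by Theorem \ref{injective2}, $\partial Z_{CAT(0)}$ is homeomorphic to $\Lambda(\Gamma_{n})$, and as the introduction notes, the limit set of a nonelementary Kleinian group is never a join of Cantor sets --- the paper only proves \emph{containment} of $\ast^{n}K_{3}$, and describes $\partial Z_{CAT(0)}$ as $\overline{\Gamma(\mathbb{S})}$, the orbit closure of $2^{n}$ round spheres. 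The paper instead obtains both inequalities at once: $\tilde{Z}$ is an $n$-dimensional CAT(0) space with a geometric $\Gamma_{n}$-action, so $\dim \partial Z_{CAT(0)}=n-1$ by Bestvina \cite[Theorem 1.7]{Best}, and Theorem \ref{injective2} transfers this to $\Lambda(\Gamma_{n})$. If you wish to keep your two-inequality structure, the correct substitute for the join computation is such a general dimension bound for boundaries of $n$-dimensional cocompact CAT(0) spaces, not the boundary of $F^{n}_{2}$.
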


\begin{proof}
Note that the topological dimension of $\partial Z_{CAT(0)}$ equals $n-1$ \cite[Theorem 1.7]{Best}. Then the corollary follows straightforward from Theorem \ref{injective2}. 
\end{proof}

\begin{corollary}
\label{coro:homology}
For any $n\geq 2$,  $H_{n-1}(\Lambda(\Gamma_{n}))$ is nontrivial. 
\end{corollary}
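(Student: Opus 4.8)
The plan is to move the computation onto the CAT(0) boundary and then detect the top class through group cohomology with group-ring coefficients. By Theorem \ref{injective2} the natural map $i$ is a homeomorphism, so $\Lambda(\Gamma_n)$ and $\partial Z_{CAT(0)}$ have the same (\v{C}ech) (co)homology, and it suffices to show that the degree $n-1$ (co)homology of $\partial Z_{CAT(0)}$ is nonzero. It is worth emphasizing at the outset why this is the right target rather than an identification $\partial Z_{CAT(0)}\cong\geo F^{n}_{2}$: as noted in the introduction, $\Lambda(\Gamma_n)$ is not a join of Cantor sets, so the two boundaries are \emph{not} homeomorphic; what they share is the \emph{top} cohomology, because the groups $\Gamma_n$ and $F^{n}_{2}$ both have cohomological dimension $n$. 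Since $\tilde Z$ is CAT(0), hence contractible, the quotient $Z$ is a finite aspherical complex of dimension $n$, so $\Gamma_n=\pi_1(Z)$ is of type $FL$ with $\mathrm{cd}(\Gamma_n)\le n$.

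First I would pin down the cohomological dimension. The complex $Z$ contains the closed hyperbolic $n$-manifold $\Sigma_{0\cdots 0}$; this submanifold is totally geodesic in $M$, hence $\pi_1$-injective into $\pi_1(M)$, and since that inclusion factors through $Z$, the map $\pi_1(\Sigma_{0\cdots 0})\to\Gamma_n$ is injective. As $\Sigma_{0\cdots 0}$ is a closed aspherical $n$-manifold, $\pi_1(\Sigma_{0\cdots 0})$ is a Poincar\'e duality group of dimension $n$ (over any coefficient ring $R$, e.g. $R=\mathbb{Z}/2$), so monotonicity of cohomological dimension under subgroups forces $\mathrm{cd}_{R}(\Gamma_n)=n$. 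Because $\Gamma_n$ is of type $FP$ with $\mathrm{cd}_{R}(\Gamma_n)=n$, the top-degree functor $H^{n}(\Gamma_n;-)$ is right exact and nonzero, which gives the standard nonvanishing $H^{n}(\Gamma_n;R\Gamma_n)\neq 0$.

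Next I would feed this into the boundary via the same Bestvina-type relation already used for Corollary \ref{topdim}. The visual compactification $\overline{\tilde Z}=\tilde Z\cup\partial Z_{CAT(0)}$ is a contractible $\mathbb{Z}$-compactification, so $\partial Z_{CAT(0)}$ is a $\mathbb{Z}$-boundary for $\Gamma_n$. The long exact sequence of the pair $(\overline{\tilde Z},\partial Z_{CAT(0)})$, together with $\check H^{k}(\overline{\tilde Z};R)=0$ for $k\ge 1$ and the identification $H^{k+1}(\overline{\tilde Z},\partial Z_{CAT(0)};R)\cong H^{k+1}_c(\tilde Z;R)\cong H^{k+1}(\Gamma_n;R\Gamma_n)$, yields for $n\ge 2$ the isomorphism $\check H^{n-1}(\partial Z_{CAT(0)};R)\cong H^{n}(\Gamma_n;R\Gamma_n)\neq 0$. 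Note $n-1=\dim\partial Z_{CAT(0)}$ by Corollary \ref{topdim}, so this is exactly the top nonvanishing degree.

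Finally I would convert cohomology into the homology of the statement. Working over a field $R$, \v{C}ech cohomology is dual to \v{C}ech homology, so $\check H^{n-1}(\partial Z_{CAT(0)};R)\neq 0$ gives $\check H_{n-1}(\partial Z_{CAT(0)};R)\neq 0$, and transporting along the homeomorphism $i$ from Theorem \ref{injective2} gives $H_{n-1}(\Lambda(\Gamma_n))\neq 0$. The main obstacle I anticipate is precisely this last translation: the limit set is a wild compact metric space, not an ANR, so one must work with \v{C}ech/Steenrod theory throughout and choose coefficients carefully; the clean route is to run the entire argument with $R=\mathbb{Z}/2$, which also removes any orientability and torsion subtleties in the Poincar\'e duality step. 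A secondary point to check is that the visual compactification of $\tilde Z$ genuinely furnishes a $\mathbb{Z}$-structure, so that the exact-sequence identification above is legitimate; this is standard for boundaries of CAT(0) (indeed CAT($-1$)) groups.
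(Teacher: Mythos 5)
Your proposal is correct in substance but takes a genuinely different, and much heavier, route than the paper. The paper's proof is two lines: the limit set contains the $(n-1)$-spheres $S_{i_1\cdots i_n}$ (ideal boundaries of the lifts through $\tilde{O}$ of the totally geodesic submanifolds $\Sigma_{i_1\cdots i_n}$), and since $\dim \Lambda(\Gamma_n)=n-1$ by Corollary \ref{topdim}, dimension theory (the extension characterization of covering dimension: a map from a closed subset of a compact metric space of dimension $\leq n-1$ to $S^{n-1}$ extends over the whole space) lets one extend the identity of such a sphere to a retraction $\Lambda(\Gamma_n)\rightarrow S^{n-1}$, so $H_{n-1}(S^{n-1})\rightarrow H_{n-1}(\Lambda(\Gamma_n))$ is split injective and the sphere's fundamental class is a nontrivial \emph{integral} class, in singular as well as \v{C}ech homology. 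You instead detect the top class algebraically: $\mathrm{cd}(\Gamma_n)=n$ via the Poincar\'e duality subgroup $\pi_1(\Sigma_{0\cdots 0})$ (your $\pi_1$-injectivity argument through total geodesy is fine), then $\check{H}^{n-1}(\partial Z_{CAT(0)};R)\cong H^{n}(\Gamma_n;R\Gamma_n)\neq 0$ via the $\mathcal{Z}$-structure on the visual compactification of $\tilde{Z}$, then dualize over a field. This is legitimate, and it is essentially the machinery already hiding inside the paper's citation of Bestvina for Corollary \ref{topdim}, whose dimension formula is proved through exactly this kind of nonvanishing; your route buys generality (no explicit spheres in the boundary are needed, and the argument applies to any $\mathcal{Z}$-boundary of a torsion-free group of the right cohomological dimension). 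What it costs is precision in the conclusion: as you yourself flag, you end with nonvanishing of \v{C}ech homology with field coefficients (say $\mathbb{Z}/2$), which is weaker than the statement if ``$H_{n-1}$'' is read as singular or integral homology; to close that gap you should append the retraction step onto an embedded sphere --- which your own dimension input already furnishes --- exactly as the paper does. Your aside that $\Lambda(\Gamma_n)$ is not homeomorphic to the join of Cantor sets is consistent with the paper's remark about limit sets of nonelementary Kleinian groups, and in any case nothing in your argument depends on it.
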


\begin{proof}
Recall that any $(n-1)$-dimensional sphere $S_{i_{1}\cdots i_{n}}$ generates a cycle, and it is nontrivial in homology  by Corollary \ref{topdim}. 

\end{proof}

\bibliographystyle{abbrv}
\bibliography{biblio}

\end{document}